\newcommand{\beas}{\begin{eqnarray*}}
\newcommand{\eeas}{\end{eqnarray*}}
\newcommand{\bea}{\begin{eqnarray}}
\newcommand{\eea}{\end{eqnarray}}
\newcommand{\beq}{\begin{equation}}
\newcommand{\eeq}{\end{equation}}
\newcommand{\ben}{\begin{enumerate}}
\newcommand{\een}{\end{enumerate}}
\newtheorem{theorem}{Theorem}[section]
\newtheorem{lemma}[theorem]{Lemma}
\newtheorem{proposition}[theorem]{Proposition}
\newtheorem{conjecture}[theorem]{Conjecture}
\theoremstyle{definition}
\newtheorem{remark}[theorem]{Remark}
\definecolor{darkblue}{rgb}{0,0,0.6}
\title[Unimodality of partitions with distinct parts inside Ferrers shapes]{Unimodality of partitions with distinct parts\\inside Ferrers shapes}
\author[Richard P. Stanley]{Richard P. Stanley${}^{\ast }$} \address{Department of Mathematics\\ MIT\\ Cambridge, MA 02139-4307}
\email{rstan@math.mit.edu}
\author[Fabrizio Zanello]{Fabrizio Zanello${}^{\ast \ast}$} \address{Department of Mathematical  Sciences\\ Michigan
  Tech\\ Houghton, MI  49931-1295} 
\email{zanello@mtu.edu}
\thanks{2010 {\em Mathematics Subject Classification.} Primary: 05A17;
  Secondary: 05A15, 05A16, 05A19.\\\indent 
{\em Key words and phrases.} Integer partition; shifted Ferrers
diagram; $q$-binomial coefficient; partition with distinct parts;
unimodality; $q$-analog; generating function; bijective proof.\\\indent
${}^*$This author's contribution is based upon work supported by the National
    Science Foundation under Grant No.~DMS-1068625.\\\indent
    ${}^*{}^*$This author is partially supported by a
Simons Foundation grant (\#274577).}
\begin{document}

\begin{abstract} We investigate the rank-generating function
  $F_{\lambda}$ of the poset of partitions contained inside a given
  shifted Ferrers shape $\lambda$. When $\lambda $ has four parts, we
  show that $F_{\lambda}$ is unimodal when $\lambda =\langle
  n,n-1,n-2,n-3 \rangle$, for any $n\ge 4$, and that unimodality fails
  for the doubly-indexed, infinite family of partitions of the form
  $\lambda=\langle n,n-t,n-2t,n-3t \rangle$, for any given $t\ge 2$
  and $n$ large enough with respect to $t$.

When $\lambda $ has $b\le 3$ parts, we show that our rank-generating
functions $F_{\lambda}$ are all unimodal. However, the situation
remains mostly obscure for $b\ge 5$. In general, the type of results
that we obtain present some remarkable similarities with those of the
1990 paper of D. Stanton, who considered the case of partitions inside
ordinary (straight) Ferrers shapes.

Along the way, we also determine some interesting $q$-analogs of the
binomial coefficients, which in certain instances we conjecture to be
unimodal. We state several other conjectures throughout this note, in
the hopes to stimulate further work in this area. In
particular, one of these will attempt to place into a much broader
context the unimodality of the posets $M(n)$ of staircase partitions,
for which determining a combinatorial proof remains an outstanding
open problem.
\end{abstract}

\maketitle

\section{Introduction}

A classical result in combinatorics is the unimodality of the $q$-binomial
coefficient (or Gaussian polynomial) $\binom{n+b}{b}_q$, which is the
rank-generating function of the poset of integer partitions having at
most $b$ parts and whose largest part is at most $n$, denoted $L(b,n)$
(see e.g. \cite{Pr1,St1980,St5,Sy}, and of course K. O'Hara's
celebrated combinatorial proof \cite{Oh,Zei}). In other words, the
coefficients of $\binom{n+b}{b}_q$ are \emph{unimodal}, i.e., they do
not increase strictly after a strict decrease.

Recall that a nonincreasing sequence
$\lambda=(\lambda_1,\dots,\lambda_b)$ of positive integers is  a
\emph{partition} of $N$ if $\sum_{i=1}^b\lambda_i=N$. The $\lambda_i$
are the \emph{parts} of $\lambda $, and the index $b$  is  its
\emph{length}.  A partition $\lambda$ can be represented geometrically
by its \emph{Ferrers diagram}, which is  a collection of cells,
arranged in left-justified rows, whose $i$th row contains exactly
$\lambda_i$ cells. With a slight abuse of notation, we will sometimes
also denote by $\lambda $ the Ferrers diagram of the partition
$\lambda $.  

For some useful introductions and basic results of partition
theory, we refer the reader to \cite{And,AE,Pak}, Section I.1 of
\cite{Ma}, and Section 1.8 of \cite{St0}. For any other standard
combinatorial definition, we refer to \cite{St0}. 

The unimodality of $\binom{n+b}{b}_q$ can  be rephrased in terms
of Ferrers diagrams, by saying that the {rank-generating function} of
the {poset} of partitions whose Ferrers diagrams are contained inside
a $b\times n$ rectangle, namely $G_{\lambda}$, where
$\lambda=(\lambda_1=n,\lambda_2=n,\dots,\lambda_b=n)$, is unimodal. In
his 1990 paper \cite{stanton}, D. Stanton studied the rank-generating
function $G_{\lambda}$ of partitions contained inside other Ferrers
shapes $\lambda$. Not surprisingly, $G_{\lambda} $ can be nonunimodal
for certain $\lambda$, the smallest of which turned out to be
$\lambda=(8,8,4,4)$. Stanton was also able to determine infinitely
many nonunimodal partitions $\lambda$ with $b=4$ parts, while he
proved that unimodality always holds when $b\le 3$. He also showed
that nonunimodal partitions exist for $b=6$, whereas all examples known
to date when $b=5$ or $b\ge 7$ are unimodal. 

A well-known variant of the Ferrers diagram of a partition is the
\emph{shifted Ferrers diagram} of a partition $\lambda$ with distinct
parts. Such diagrams have $\lambda_i$ cells in row $i$ as before, but
now each row is indented one cell to the right of the previous row.
The goal of this note is to study the rank-generating function
$F_{\lambda}$ of the poset of partitions $\mu$ contained inside a
shifted Ferrers shape $\lambda$. Equivalently,
$\mu$ is a partition \emph{with distinct parts} contained in an
ordinary (straight) Ferrers shape $\lambda$. We write $\langle
\lambda_1,\lambda_2,\dots,\lambda_n\rangle$ for a partition $\lambda$
with distinct parts, having at most $n$ parts, regarded as a shifted
diagram. Note that 0 is not considered a part, so $\lambda=\langle
4,2,1,0,0\rangle$ is a partition with distinct parts.
For instance, the partitions contained inside $\lambda=\langle
4,2,1\rangle$ 
are: $\emptyset$ (the empty partition); $\langle 1\rangle$ partitioning 1; $\langle 2\rangle$
partitioning 2, $\langle 3\rangle$ and $\langle 2,1\rangle$ partitioning 3; $\langle 4\rangle$ and $\langle 3,1\rangle$
partitioning 4; $\langle 4,1\rangle$ and $\langle 3,2\rangle$ partitioning 5; $\langle 4,2\rangle$ and
$\langle 3,2,1\rangle$ partitioning 6; and $\langle 4,2,1\rangle$ partitioning 7. Thus,  
$$F_{\langle 4,2,1\rangle}(q)=1+q+q^2+2q^3+2q^4+2q^5+2q^6+q^7.$$

While Stanton's work was in part motivated by the interest of the
unimodality of rectangular Ferrers shapes, the corresponding prototype
of partition in our situation is the ``shifted staircase partition''
$\lambda=\langle b,b-1,\dots,2,1\rangle$. The  poset of partitions with distinct
parts that it generates is often referred to as $M(b)$. It is a
standard exercise to show that $F_{\lambda}(q)=\prod_{i=1}^b
(1+q^i)$. The unimodality of this polynomial, which was essentially
first proved by E.B. Dynkin \cite{D1,D2} (see also \cite{Pr1,St5}), is
also closely related to the famous Erd\"os-Moser conjecture, solved by
the first author in \cite{St5}. Notice, however, that the simplest
proof known to date of the unimodality of the staircase partition uses
a linear algebra argument \cite{Pr1}; it remains an outstanding open
problem in combinatorics to determine a constructive proof. 

Though our situation is obviously essentially different from that of
Stanton --- for instance, it is easy to see that our rank-generating
functions $F_{\lambda}$ are never symmetric if $\lambda$ has at least
two parts, with the only exception of the staircase partitions ---
some of our results for distinct parts will show a remarkable
similarity to the case of arbitrary partitions. In  this paper we will
mostly focus on partitions $\lambda$ whose parts are in arithmetic
progression, even though, similarly to what was done in
\cite{stanton}, it is possible to naturally extend some results or
conjectures to partitions having distinct parts that lie within
certain intervals. 

In the next section, we will consider the case when $\lambda$ has four
parts. First, we show that $F_{\lambda}$ is unimodal for all
``truncated staircases'' $\lambda=\langle
n,n-1,n-2,n-3\rangle$. Notice that, unlike in many other instances of
nontrivial unimodality results in combinatorics, in this case
$F_{\lambda}$ is never symmetric (for $n>4$) nor, as it will be clear
from the proof, log-concave.

Our second main result is the existence of a doubly-indexed, infinite
family of nonunimodal rank-generating functions $F_{\lambda}$. Namely,
we will show that if $\lambda=\langle n,n-t,n-2t,n-3t\rangle$, where
$t\ge 2$, then $F_{\lambda}$ is always nonunimodal whenever $n$ is
large enough with respect to $t$ (the least such $n$ can be
computed effectively). For $t=2$, as we will see in the subsequent
section, the rank-generating function $F_{\lambda}$ turns out to be a
$q$-analog of the binomial coefficient $\binom{n+1}{4}$. We will
briefly discuss the meaning of these new $q$-analogs
$\binom{a}{b}^q$. Interestingly, even though, unlike the $q$-binomial
coefficient, in general they can be nonunimodal, we will conjecture
unimodality for our \emph{central} $q$-analogs of the binomial coefficients,
$\binom{2n+1}{n}^q$ and $\binom{2n}{n}^q$.

Similarly to Stanton's situation, we will  show that $F_{\lambda}$
is unimodal for any partition $\lambda$ with at most $b=3$ parts (in
fact, we will rely on Stanton's theorem to give a relatively quick
proof of our result). Moreover, again like Stanton, we are unaware of
the existence of any nonunimodal rank-generating function
$F_{\lambda}$ when $b=5$ or $b\ge 7$, and will provide examples of
nonunimodal $F_{\lambda}$ for $b=6$ that we have not been able to
place into any infinite family. 

Next we conjecture the unimodality of all
partitions $\lambda$ having parts in arithmetic progression that begin
with the smallest possible positive residue. This conjecture, if true,
would place the still little understood unimodality of the staircase
partitions into a much broader context.  Other conjectures are
given throughout this note. 

\section{Partitions of length four}

In this section, we study the rank-generating functions $F_{\lambda}$
of partitions $\lambda$ of length four. We focus in our statements on
partitions whose parts are in arithmetic progression, which is the
most interesting case; i.e., we consider $\lambda$ to be of the form
$\lambda=\langle n,n-t,n-2t,n-3t\rangle$. Notice, however, that certain arguments
could naturally be applied to partitions whose parts lie inside
suitable intervals, similarly to some of the cases studied by Stanton
\cite{stanton}. 

Our first main result of this section is that if
$\lambda=\langle n,n-1,n-2,n-3\rangle$, then $F_{\lambda}$ is unimodal, for any
$n\ge 4$. In contrast, our second  result will show that the
doubly-indexed, infinite family of partitions
$\lambda=\langle n,n-t,n-2t,n-3t\rangle$ are nonunimodal, for any given $t\ge 2$
and $n$  large enough with respect to $t$. The proofs of both results
will be mostly combinatorial, and rely in part on the following
elegant properties of the coefficients of the $q$-binomial coefficients
$\binom{a+4}{4}_q$, which are  of independent interest. 

\begin{lemma}\label{fac}
Let $\binom{a+4}{4}_q=\sum_{i=0}^{4a}d_{a,i}q^i$. Define
$f(a,c)=d_{a,2a-c}-d_{a,2a-c-1}$ for $c\ge 0$, and $f(a,c)=0$ for $c<0$. (Hence, $f(a,c)=0$ for $c>2a$.) We have: 
\begin{enumerate}
\item[(a)] $$\sum_{a,c}f(a,c)q^at^c= \frac{1}{(1-q^2)(1-q^3)(1-qt^2)}+
     \frac{q^2t^2}{(1-q^2)(1-q^3)(1-qt^2)(1-qt)};$$ 
\item[(b)] $f(a,c)\ge 0$ for all $a$ and $c$. Moreover, if $a\ge 2$ and
  $0\le c\le 2a$, then equality holds if and only if $c=1$, $c=2a-1$,
  or $(a,c)=(4,3)$; 
\item[(c)] $f(a,0)=\lfloor (a+3\delta)/6 \rfloor$, where $\delta=1$ if
  $a$ is odd, and $\delta=2$ if $a$ is even. In particular, $f(a,0)$
  goes to infinity when $a$ goes to infinity.
\end{enumerate}
\end{lemma}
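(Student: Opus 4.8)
The plan is to establish part (a) first, since parts (b) and (c) will follow by extracting coefficients from the rational generating function. For (a), I would start from the standard generating function identity
\[
\sum_a \binom{a+4}{4}_q x^a = \frac{1}{(1-x)(1-xq)(1-xq^2)(1-xq^3)(1-xq^4)},
\]
but this is unwieldy for tracking the sign changes $d_{a,2a-c}-d_{a,2a-c-1}$ around the "center" $i \approx 2a$. Instead, the cleaner route is to use the unimodality and symmetry of $\binom{a+4}{4}_q$ (its coefficients are symmetric about $i = 2a$ and the polynomial has degree $4a$), together with an explicit description of the successive differences $d_{a,i}-d_{a,i-1}$ for $i \le 2a$. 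These differences are themselves nonnegative (by unimodality) and have a known product/generating-function form; concretely, the bivariate series $\sum_{a,c} f(a,c) q^a t^c$ should be computable by summing the one-variable generating functions $\sum_a f(a,c) q^a$ over $c$, each of which is a rational function in $q$ with denominator dividing $(1-q^2)(1-q^3)$ (these being the "new" relations that appear when one passes from differences of $\binom{a+3}{3}_q$-type data). I would verify the claimed closed form by clearing denominators and checking the resulting polynomial identity in $q$ and $t$ — a finite but routine computation — or, more elegantly, by giving a combinatorial interpretation of $f(a,c)$ as counting partitions fitting in a $4\times(a)$-type box with a prescribed number of columns, for which the two-term structure on the right-hand side (one term for the "generic" contribution, one correction term with the extra $(1-qt)$ factor) has a direct bijective meaning.

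For part (b), I would expand the right-hand side of (a) as a power series in $q$ and $t$. The first summand contributes $\sum_{j,k\ge 0} q^{2j_1+3j_2+k} t^{2k}$-type terms (nonnegative coefficients), and the second contributes nonnegative terms shifted by $q^2t^2$ and smeared by the extra $1/(1-qt)$; hence $f(a,c)\ge 0$ is immediate. The delicate part is the \emph{equality} characterization: I would show that for $a\ge 2$ the coefficient of $q^a t^c$ vanishes precisely in the three listed cases by a careful, but elementary, analysis of which $(a,c)$ can fail to be hit by either summand. The cases $c=1$ and $c=2a-1$ correspond to the extreme nonzero columns $t^c$ where only very constrained representations are available (for $c=1$ the first summand forces $t^2\mid t^c$, impossible, and the second forces $c\ge 2$ with equality giving a genuine contribution — so in fact one must track the boundary carefully); the sporadic exception $(a,c)=(4,3)$ is a genuine small-case coincidence that I expect to isolate by direct inspection once the generating function is in hand. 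I anticipate this equality analysis to be the main obstacle, since it requires ruling out \emph{all} other vanishings rather than exhibiting the listed ones.

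For part (c), I would set $c=0$ in the generating function of (a): the second summand contributes nothing (it is divisible by $t^2$), so
\[
\sum_a f(a,0) q^a = \frac{1}{(1-q^2)(1-q^3)}.
\]
Extracting the coefficient of $q^a$ then amounts to counting representations $a = 2j_2 + 3j_3$ with $j_2,j_3\ge 0$, i.e. the number of such $(j_2,j_3)$, which is a classical "postage stamp"/denumerant computation giving $\lfloor a/6\rfloor + 1$ adjusted by the residue of $a$ mod $6$ — and one checks directly that this equals $\lfloor (a+3\delta)/6\rfloor$ with $\delta$ as stated (splitting into the cases $a$ even and $a$ odd, equivalently the six residues mod $6$). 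The divergence $f(a,0)\to\infty$ is then immediate since the count grows linearly in $a$. The only care needed here is matching the two floor-function formulas across all residue classes, which is a short finite verification.
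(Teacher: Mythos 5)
Your handling of parts (b) and (c) follows essentially the same route as the paper: nonnegativity in (b) from unimodality of $\binom{a+4}{4}_q$ or directly from (a); the vanishings at $c=1$, $c=2a-1$ and $(a,c)=(4,3)$ checked directly, with the converse (no other vanishing) left as a careful but elementary case analysis that neither you nor the paper actually carries out; and (c) obtained by taking the $t^0$ part of (a), so that $\sum_a f(a,0)q^a=1/\bigl((1-q^2)(1-q^3)\bigr)$ counts partitions of $a$ into parts $2$ and $3$, with the floor formula a short residue-class check. On those two parts you are on the paper's track.

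The genuine gap is part (a), which is where all the content sits. The paper does not prove it in-house either: it cites Theorem 2.2 of \cite{SZ}, and a remark sketches an alternative proof via a recursion for $\binom{a+4}{4}_q$ extracted from O'Hara's argument (another remark gets the needed special cases from West's symmetric chain decomposition of $L(4,a)$). Your plan, by contrast, is to ``verify the claimed closed form by clearing denominators and checking the resulting polynomial identity in $q$ and $t$.'' That step does not exist as described: the left-hand side $\sum_{a,c}f(a,c)q^at^c$ is defined coefficientwise through the near-central differences $d_{a,2a-c}-d_{a,2a-c-1}$ of the $q$-binomial coefficients, and is not given as a rational function, so there are no denominators to clear and no finite polynomial identity to check. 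The actual work is to derive a closed form for these differences --- a diagonal-type coefficient extraction around degree $2a$ --- which is precisely what \cite{SZ} (or the O'Hara-type recursion, or West's decomposition) supplies; your observation that $\sum_a f(a,c)q^a$ should have denominator dividing $(1-q^2)(1-q^3)$ is a consequence of (a), not an independent ingredient you may assume. Your fallback suggestion of a bijective interpretation of $f(a,c)$ is plausible but is stated only as a hope; until one of these derivations is carried out, (a) --- and with it the equality statements in (b) and the formula in (c) that you deduce from it --- remains unproved in your write-up.
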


\begin{proof} (a) See Theorem 2.2 of \cite{SZ}.

(b) That $f(a,c)\ge 0$ for all $a$ and $c$ is obvious from part (a) or
  also from the fact that the $q$-binomial coefficient
  $\binom{a+4}{4}_q$ is unimodal and therefore nondecreasing up to degree $2a$. As for the second part of the statement, that the coefficients of $q^at$, $q^at^{2a-1}$ and $q^4t^3$ are
  0 for all $a$ is easy to check directly. Proving the converse
  implication requires some careful but entirely standard analysis, so
  we will omit the details.

(c) From part (a), we immediately have that the generating function
  for $f(a,0)$ is  
$$\sum_{a\ge 0}f(a,0)q^a=\frac{1}{(1-q^2)(1-q^3)}.$$

In other words, $f(a,0)$ counts the number of partitions of $a$ whose
parts can only assume the values 2 and 3. That their number now is the
one in the statement is a simple exercise that we leave to the
reader. This completes the proof of the lemma.
\end{proof}

\begin{remark} 
It is possible to give an entirely combinatorial proof of parts (b)
and (c) of Lemma~\ref{fac}, using D. West's symmetric chain
decomposition for the poset $L(4,a)$, whose rank-generating function
is of course $\binom{a+4}{4}_q$ (see \cite{west} for all
details). However, the  argument would be less elegant and require
significantly more work than using part (a) of the lemma.  

On the other hand, the portions of the statement that will later
suffice to show the nonunimodality of $\lambda=\langle
n,n-t,n-2t,n-3t\rangle$ for 
$t\ge 2$ and $n$ large --- namely that $f(a,1)=0$ for all $a$, and
that $f(a,0)$ goes to infinity when $a$ goes to infinity --- are easy and
interesting to show using West's result. Indeed, it is clear that in
his decomposition of $L(4,a)$  there exist no symmetric chains of
cardinality three (one should only check that the cardinality of the
chains $D_{i,j}$ defined at the middle of page 13 of \cite{west}
cannot equal 3, by how the indexes $i$ and $j$ are defined for the new
chains on page 7). This immediately gives that $f(a,1)=0$, since,
clearly, $f(a,c)= 0$ if and only if  in West's construction there
exist no symmetric chains of cardinality $2c+1$.   

In order to show that $f(a,0)$ goes to infinity, notice  that West's
proof implies that $f(a,0)$ is nondecreasing, since in the inductive
step he makes an injection between the chains constructed for $a-1$
and those for $a$. Thus, in the formula for the cardinality of
$c_{i,j}$ at the middle of page 13 of \cite{west}, one can for
instance  choose $a$ to be a multiple of 6, $i=a/3$, and $j=0$. This
easily shows, for these values of $a$, the existence of an extra chain of cardinality 1 that does not come from $a-1$, which suffices to make $f(a,0)$  go to infinity. (In fact, a little more
work proves in this fashion all of part (c) of Lemma \ref{fac}.) 
\end{remark}

\begin{remark}
We thank one of the anonymous referees for pointing out to us an alternative, combinatorial proof of part (a) of Lemma~\ref{fac}, that we sketch below.

It can be shown that K. O'Hara's proof of the unimodality of $q$-binomial
coefficients \cite{Oh,Zei} implies the following recursion for ${a+4 \choose 4}_q$:
$$ {a+4 \choose 4}_q = {a+1 \choose 4}_q + \sum_{0 \leq j < a/2} q^{2j} {2j+1 \choose 1}_q {4a-6j + 1 \choose 1}_q + \frac{(1+(-1)^a) q^a }{2} {a+2 \choose 2}_q .$$

From this, it follows that if we let $F(q,t)$ denote the generating function in Lemma~\ref{fac}, (a), then the coefficient of $q^k$ in $(1-q^3)F(q,t)$ is given by:
$$  \frac{(1+(-1)^k)}{2} \sum_{0 \leq j \leq k/2} t^{2j}  + \sum_{0 \leq j < k/2} t^{2k-4j} \sum_{i = 0}^{2j} t^i. $$

Finally, by standard algebraic manipulations, one can  see that the last displayed formula is equal to the coefficient of $q^k$ in
$$\frac{1-qt+q^2t^2}{(1-q^2)(1-qt)(1-qt^2)},$$
which easily completes the proof of Lemma~\ref{fac}, (a).
\end{remark}

\begin{theorem}\label{uni}
Let $\lambda=\langle n,n-1,n-2,n-3\rangle$, where $n\ge 4$. Then the
rank-generating function $F_{\lambda}$ is unimodal. 
\end{theorem}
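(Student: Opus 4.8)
The plan is to rewrite $F_\lambda$ as a short combination of Gaussian polynomials and then prove unimodality by examining its successive differences, with Lemma~\ref{fac} supplying the one inequality that is not transparent. First I would establish a closed form: a subpartition of $\langle n,n-1,n-2,n-3\rangle$ with exactly $k$ parts is a sequence $\mu_1>\cdots>\mu_k\ge 1$ with $\mu_i\le n-i+1$, and subtracting the minimal such staircase, $\alpha_i:=\mu_i-(k-i+1)$, gives a bijection with ordinary partitions $\alpha$ into at most $k$ parts each at most $n-k$, where $|\mu|=|\alpha|+\binom{k+1}{2}$. Summing over $k=0,1,2,3,4$ gives $F_\lambda(q)=\sum_{k=0}^{4}q^{\binom{k+1}{2}}\binom{n}{k}_q$, and the $q$-Pascal identity $\binom{n+1}{4}_q=\binom{n}{3}_q+q^4\binom{n}{4}_q$ merges the last two terms, so that
\[
F_\lambda=E+A+C,\qquad E=\sum_{j=0}^{n}q^j,\quad A=q^3\binom{n}{2}_q,\quad C=q^6\binom{n+1}{4}_q .
\]
Each of $E$, $A$, $C$ is symmetric and unimodal, supported on $[0,n]$, $[3,2n-1]$, $[6,4n-6]$, with centres $n/2$, $n+1$, $2n$.

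For $n=4$ the shape is the shifted staircase $M(4)$ and $F_\lambda=\prod_{i=1}^{4}(1+q^i)$, which is classically unimodal, so I would assume $n\ge 5$ from here on and work with the successive differences $\delta_i:=[q^{i+1}]F_\lambda-[q^i]F_\lambda$, aiming to show $\delta_i\ge 0$ for $0\le i\le 2n-1$ and $\delta_i\le 0$ for $i\ge 2n$ (whence unimodality, with the peak at degree $2n$). Writing $\binom{n+1}{4}_q=\binom{(n-3)+4}{4}_q$, the difference of $C$ at degree $i$ is, in the notation of Lemma~\ref{fac}, exactly $f(n-3,\,2n-1-i)$ for $i\le 2n-1$. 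On $0\le i\le n-1$ all three of $E,A,C$ are weakly increasing and $\delta_i\ge 0$; on $i\ge 2n$ the polynomials $E,A$ have ended and $C$ is past its centre, so $\delta_i\le 0$. Both ranges are routine.

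The substance lies in the middle range $n\le i\le 2n-1$, where $A=q^3\binom{n}{2}_q$ is already decreasing while $C=q^6\binom{n+1}{4}_q$ is still increasing. There $E$ contributes $-1$ at $i=n$ and $0$ thereafter; a direct computation shows the difference of $A$ equals $[n\text{ even}]\ge 0$ at $i=n$ and $-[i\text{ odd}]$ for $n+1\le i\le 2n-1$; and the difference of $C$ is $f(n-3,\,2n-1-i)\ge 0$. The key observation is that $A$ drops by a unit precisely when $i$ is odd, in which case $2n-1-i$ is \emph{even}, and Lemma~\ref{fac}(b) then guarantees $f(n-3,\,2n-1-i)\ge 1$ — the only zeros of $f(a,c)$ in the range $0\le c\le 2a$, $a\ge 2$, occurring at odd $c$ — so $C$'s increase cancels $A$'s drop and $\delta_i\ge 0$. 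The lone drop of $E$ (at $i=n$) is absorbed the same way by $f(n-3,\,n-1)\ge 1$, with the single exception $n=6$, where $f(3,5)=0$ but $n$ is even so the $+1$ from $A$ suffices.

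I expect the main obstacle to be precisely this middle range: it is the only place where $F_\lambda$ is a sum of summands moving in opposite directions, and the cancellation between the unit drops of $E$ and $A$ and the growth of $C$ is genuinely tight, so nothing weaker than the exact positivity statement of Lemma~\ref{fac}(b) — that near its centre the first differences of $\binom{n+1}{4}_q$ are $\ge 1$ at the even positions — will close the argument. Everything else is bookkeeping with the supports and centres of $E,A,C$ together with the evident unimodality of each.
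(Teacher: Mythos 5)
Your proof is correct and takes essentially the same route as the paper: your $E+A$ is exactly $1+q\binom{n+1}{2}_q$, so your decomposition coincides with the paper's equation~(\ref{4}), and the middle range $n\le i\le 2n-1$ is settled by the same appeal to Lemma~\ref{fac}(b), yielding the same peak at degree $2n$. The only real difference is bookkeeping: your parity observation (all zeros of $f(a,c)$ occur at odd $c$, while the unit drops of $E$ and $A$ must be offset only at even $c=2n-1-i$) handles every $n\ge 5$ uniformly, whereas the paper assumes $n\ge 8$ and disposes of the small cases by computer.
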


\begin{proof}
In the case when the  partitions lying inside
$\lambda=\langle n,n-1,n-2,n-3\rangle$ have  three or four parts, by removing the staircase
$\langle 3,2,1\rangle$ from $\lambda$, it is easy to see that such partitions are in
bijection with the \emph{arbitrary} partitions contained inside a
$4\times (n-3)$ rectangle, whose rank-generating function is
$\binom{n+1}{4}_q$. In a similar fashion, when the  partitions lying inside
$\lambda=\langle n,n-1,n-2,n-3\rangle$ have at most two parts, by removing the staircase
$\langle 1\rangle$ (if possible) we can see that such partitions are enumerated by
$\binom{n+1}{2}_q$. 

From this, we immediately have that $F_{\lambda}$ decomposes as:
\begin{equation}\label{4}
F_{\lambda}(q)=1+q \binom{n+1}{2}_q+q^6\binom{n+1}{4}_q.
\end{equation}

We can assume for simplicity that $n\ge 8$, since the result is easy
to check (e.g., using Maple) for $n\le 7$. Let $c_i$ be the
coefficient of degree $i$ of $q^6\binom{n+1}{4}_q$. Hence, $c_i\neq 0$
if and only if $6\le i\le 4n-6$, and because of the unimodality of the
$q$-binomial coefficient $\binom{n+1}{4}_q$, the $c_i$ are also unimodal
with a peak at $c_{2n}$. Further, it easily follows from Lemma
\ref{fac} that $c_i>c_{i-1}$ for all $n+1\le i\le 2n$, with the
exception of $i=2n-1$, which gives $c_{2n-1}=c_{2n-2}$.  

On the other hand, notice that the $q$-binomial coefficient $q
\binom{n+1}{2}_q$ is a unimodal function; its coefficients $d_i$ are
nonzero for $1\le i\le 2n-1$, and they assume a peak at $d_n$. Also, it is
a simple exercise to check that when $\binom{n+1}{2}_q$ decreases
(or by symmetry, it increases),  it does so by at most 1. Finally,
notice that $d_{2n-1}=d_{2n-2}$ (they are both equal to 1), which
implies that the coefficients of $F_{\lambda}$ in degree $2n-1$ and
$2n-2$ are also equal.  

Putting all of the above together, since by equation~(\ref{4})
$F_{\lambda}$ can be written as 
$$F_{\lambda}(q)=1+\sum_{i=1}^{4n-6} (c_i+d_i)q^i,$$
it is easy to check that $F_{\lambda}$ is unimodal. (In fact, we have
shown that it has a  peak in degree $2n$.) 
\end{proof}

\begin{theorem}\label{at}
Set $\lambda=\langle n,n-t,n-2t,n-3t\rangle$, where $t\ge 2$ is
fixed. Then the rank-generating function $F_{\lambda}$ is nonunimodal
for all integers $n$ large enough with respect to $t$. 
\end{theorem}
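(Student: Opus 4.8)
The plan is to mimic the decomposition of $F_\lambda$ used in the proof of Theorem~\ref{uni}, but now track carefully the one ``bad'' place where a strict decrease can be followed by a strict increase. First I would establish the analog of equation~\eqref{4}: for $\lambda=\langle n,n-t,n-2t,n-3t\rangle$, removing the shifted staircase $\langle 3t,2t,t\rangle$ (when a partition inside $\lambda$ has three or four parts) puts these partitions in bijection with the partitions with distinct parts contained in a certain shifted shape, while those with at most two parts are handled similarly. Unwinding this, $F_\lambda(q)$ breaks into a sum of a few pieces, the dominant one being (up to a power of $q$) the rank-generating function of the length-four part, which for $t=2$ is exactly the new $q$-analog $\binom{n+1}{4}^q$ mentioned in the introduction. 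The precise shape of this decomposition is the bookkeeping heart of the argument; I expect it to take the form $F_\lambda(q)=P(q)+q^{e}G(q)$ where $P$ collects the low-rank (at most two or three parts) contributions and $G$ is the four-part piece, with $e$ an explicit shift.

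Next I would locate a candidate valley. The key structural fact is that, unlike the $t=1$ case, for $t\ge 2$ the four-part piece $G$ fails to be ``nearly strictly increasing'' below its peak: Lemma~\ref{fac}(b)--(c) tells us that the first-difference function $f(a,c)$ of $\binom{a+4}{4}_q$ vanishes precisely at $c=1$ and $c=2a-1$ (and at $(a,c)=(4,3)$), and that $f(a,0)\to\infty$. For $t\ge 2$ the relevant differences of coefficients of $G$ are governed by $f(a,c)$ evaluated at indices spaced by $t$, so there is a genuine flat spot — in fact a strict local dip — in $G$ just below its peak, whose depth is controlled by values $f(a,c)$ with $c\ge 2$ that are strictly positive and grow with $n$. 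Meanwhile the lower-order pieces $P(q)$ collected from the at-most-two-parts and at-most-three-parts partitions have degree bounded away from (and below) the location of this dip once $n$ is large relative to $t$, so they cannot fill in the valley. I would make this quantitative: exhibit an explicit degree $i_0\approx 2n$ (depending on $t$) and show $[q^{i_0-1}]F_\lambda - [q^{i_0}]F_\lambda \ge c(t)>0$ while $[q^{i_0+1}]F_\lambda - [q^{i_0}]F_\lambda$ is also positive, i.e. a strict increase after a strict decrease; the gap on at least one side should be forced to grow like $f(a,0)$ or a similar unbounded quantity, using Lemma~\ref{fac}(c).

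The main obstacle, I expect, is not any single estimate but the combination of two things: (i) getting the decomposition of $F_\lambda$ exactly right for general $t$ — the shifted-staircase removal is clean for $t=1$ but for $t\ge 2$ the ``staircase'' $\langle 3t,2t,t\rangle$ removal leaves residual shapes that are no longer rectangles, so the length-three and length-four contributions are themselves $q$-analogs of binomial coefficients (the $\binom{a}{b}^q$ of the introduction) rather than honest Gaussian polynomials, and one must control their coefficient sequences; and (ii) showing that near the critical degree the contribution of the lower-length pieces is genuinely negligible, which requires comparing the degrees $3t,\ 2n-2t,\ 4n-\text{const}$ of the various pieces and checking they separate for $n\gg t$. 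Once the decomposition is pinned down, the unimodality failure should follow by combining a vanishing first difference (from the $c=1$ phenomenon in Lemma~\ref{fac}, suitably $t$-scaled) on one side with a strictly positive first difference on the other, the latter made large by Lemma~\ref{fac}(c); and the ``least such $n$ can be computed effectively'' claim then drops out because every inequality involved is explicit in $n$ and $t$.
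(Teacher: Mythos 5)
Your overall target---forcing a dip at the degrees $2n-2$, $2n-1$, $2n$ for $n$ large with respect to $t$---matches the paper's, but the mechanism you propose has a genuine gap. Lemma \ref{fac} is a statement about the Gaussian polynomial $\binom{a+4}{4}_q$, i.e.\ about partitions inside a $4\times a$ rectangle; it applies to the three-or-four-part piece of $F_\lambda$ only when $t=1$, because only then does removing $\langle 3,2,1\rangle$ from such a $\mu$ land in a rectangle. For $t\ge 2$ the same removal leaves the straight shape $(n-3,\,n-t-2,\,n-2t-1,\,n-3t)$, so that piece is one of the new $q$-analogs rather than a Gaussian polynomial (for $t=2$ the whole of $F_\lambda$ is $\binom{n+1}{4}^q$), and your key assertion that its first differences near the peak are ``governed by $f(a,c)$ evaluated at indices spaced by $t$'' is precisely what would need to be proved---for $t=2$ it \emph{is} the theorem, so as written the outline is circular at its core. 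The separation claim is also off: the two-part piece reaches degree $2n-t$, which for $t=2$ is exactly the critical degree $2n-2$, and the three-part piece reaches degree $3n-3t\gg 2n$, so the lower-length contributions are not bounded below the dip; indeed for $t=2$ the dip is created by what happens at the top of the two-row contribution, not inside the four-part piece.

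What the paper does instead, and what is missing from your proposal, is an incremental comparison in $t$: it compares $\langle n,n-(t-1),n-2(t-1),n-3(t-1)\rangle$ with $\langle n,n-t,n-2t,n-3t\rangle$ and counts exactly which partitions of $2n$, $2n-1$, $2n-2$ are lost at each step. From $t=1$ to $t=2$ the losses are $(1,1,0)$ (only $\langle n,n-1,1\rangle$ and $\langle n,n-1\rangle$ disappear), which combined with $c_{2n-1}^{(1)}=c_{2n-2}^{(1)}$ and $c_{2n}^{(1)}-c_{2n-1}^{(1)}\to\infty$ (these are the only inputs taken from Lemma \ref{fac}, via the proof of Theorem \ref{uni}) already gives the $t=2$ dip. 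For general $t$, the quantity $c_{2n}^{(t)}-c_{2n-1}^{(t)}$ drops by $\lfloor t/2\rfloor$ per step, for a total loss of order $t^2/4$, which is beaten by $f(a,0)\to\infty$ once $n\gg t$; and the inequality $c_{2n-1}^{(t)}<c_{2n-2}^{(t)}$ is preserved because an explicit injection $\mu\mapsto\mu+(0,0,1,0)$ shows that $c_{2n-1}$ decreases at least as much as $c_{2n-2}$ at each step. Without either this counting or an independent proof of the near-peak behavior of the general-$t$ four-part piece, your argument does not close.
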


\begin{proof}
Let $F_{\lambda}(q)=\sum_{i=0}^{4n-6t}c_i^{(t)}q^i$. We will prove that
$F_{\lambda}$ is nonunimodal for any  $t\ge 2$ and $n$ large enough
with respect to $t$, by showing that 
$$c_{2n}^{(t)}>c_{2n-1}^{(t)}<c_{2n-2}^{(t)}.$$

We assume from now on that $n$ is large enough. It is easy to see from
Lemma \ref{fac} and the proof of Theorem \ref{uni} that
$c_{2n-1}^{(1)}=c_{2n-2}^{(1)}$, and that $c_{2n}^{(1)}-c_{2n-1}^{(1)}$ goes to
infinity. Indeed, we have shown in that proof that, in degree $2n$,
the rank-generating function of $\langle n,n-1,n-2,n-3\rangle$ is the same as that
of $\binom{n+1}{4}_q$, whereas in degrees $2n-1$ and $2n-2$ it is
exactly one more than that of $\binom{n+1}{4}_q$, because of the extra
contributions coming from $q \binom{n+1}{2}_q$.

We begin by showing that $c_{2n}^{(2)}>c_{2n-1}^{(2)}<c_{2n-2}^{(2)}$.  Notice
that the only partition of $2n$ that lies inside
$\langle n,n-1,n-2,n-3\rangle$ but not inside $\langle n,n-2,n-4,n-6\rangle$ is
$\langle n,n-1,1\rangle$. Thus, $c_{2n}^{(2)}=c_{2n}^{(1)}-1$. Similarly, the only
partition of $2n-1$ that lies inside
$\langle n,n-1,n-2,n-3\rangle$ but not $\langle n,n-2,n-4,n-6\rangle$ is $\langle n,n-1\rangle$, and
therefore, $c_{2n-1}^{(2)}=c_{2n-1}^{(1)}-1$.

Finally, $c_{2n-2}^{(2)}=c_{2n-2}^{(1)}$, since all partitions with distinct
parts of $2n-2$ that lie inside $\langle n,n-1,n-2,n-3\rangle$ clearly cannot have
$n-1$ as their second largest part. It follows that the difference
between $(c_{2n}^{(1)},c_{2n-1}^{(1)},c_{2n-2}^{(1)})$ and
$(c_{2n}^{(2)},c_{2n-1}^{(2)},c_{2n-2}^{(2)})$ is $(1,1,0)$, which immediately
proves that $c_{2n}^{(2)}>c_{2n-1}^{(2)}<c_{2n-2}^{(2)}$, i.e., the theorem for
$t=2$.

In a similar fashion, it is easy to check that, in passing from
$\langle n,n-2,n-4,n-6\rangle$ to $\langle n,n-3,n-6,n-9\rangle$, the difference
between $(c_{2n}^{(2)},c_{2n-1}^{(2)},c_{2n-2}^{(2)})$ and
$(c_{2n}^{(3)},c_{2n-1}^{(3)},c_{2n-2}^{(3)})$ is $(3,2,2)$, showing the
result for $t=3$.

In general, if $t\ge 3$ and $n_i$ is the number of partitions of $i$ into two
distinct parts, employing the same idea as above easily gives us that
in passing from $\langle n,n-(t-1),n-2(t-1),n-3(t-1)\rangle $ to $\langle
n,n-t,n-2t,n-3t\rangle$, the difference $c_{2n}^{(t)}-c_{2n-1}^{(t)}$ decreases
by $n_{2t-3}-n_{t-2}$ with respect to $c_{2n}^{(t-1)}-c_{2n-1}^{(t-1)}$.

Notice that 
$$n_{2t-3}-n_{t-2}=\lceil (2t-3)/2\rceil - \lceil (t-2)/2\rceil = \lfloor t/2 \rfloor ,$$
if  as usual  we denote by $\lceil x\rceil$ and $\lfloor x \rfloor $ the smallest integer $\ge x$ and the largest  $\le x$, respectively.

Therefore, the difference between $c_{2n}^{(1)}-c_{2n-1}^{(1)}$ and $c_{2n}^{(t)}-c_{2n-1}^{(t)}$ amounts to $\sum_{i=1}^t\lfloor i/2\rfloor$, which has order of magnitude $t^2/4$. 

Since $c_{2n}^{(1)}-c_{2n-1}^{(1)}$ goes to infinity, this completes the proof that $c_{2n}^{(t)}>c_{2n-1}^{(t)}$ for all $t\ge 2$ and $n$  large enough with respect to $t$. 

Notice that $c_{2n-1}^{(2)}<c_{2n-2}^{(2)}$. Hence, in order to complete the
proof of the theorem it now suffices to show that, in passing from
$\langle n,n-(t-1),n-2(t-1),n-3(t-1)\rangle$ to $\langle n,n-t,n-2t,n-3t\rangle$,
$c_{2n-1}^{(t)}$ decreases at least as much as  $c_{2n-2}^{(t)}$ does, for any
$t\geq 3$. Thus, since  $n$ is large enough with respect to $t$, one
moment's thought gives that it is enough to show that there are at
least as  many partitions of $2n-1$ than there are
of $2n-2$, which are contained inside $\langle
n,n-(t-1),n-2(t-1),n-3(t-1)\rangle$ and have $n-(t-1)$ as their second
largest part.  

But if $\mu=\langle \mu_1,\mu_2=n-(t-1),\mu_3,\mu_4\rangle$ is such a partition of
$2n-2$, notice that $\mu_1\ge n-t+2$, and therefore
$$\mu_3+\mu_4\le 2n-2 - ( n-(t-1))- (n-t+2)=2t-5,$$
which is  smaller than $\mu_2=n-(t-1)$ by at least 2, since $n$ is large. 

Therefore, we can define an injection between the above partitions of
$2n-2$ and those of $2n-1$ by mapping $\mu$ to $\theta=\mu +
(0,0,1,0)$. This shows that there are at least as  many of the above
partitions of $2n-1$ as there are of $2n-2$, completing the
proof of the theorem. 
\end{proof}

\begin{remark}
The same idea of the proof of Theorem \ref{at} can prove the
nonunimodality of $F_{\lambda}$ also for other partitions
$\lambda=\langle \lambda_1, \lambda_2, \lambda_3,\lambda_4\rangle$;
namely, those that are obtained by ``perturbing'' $\langle
n,n-t,n-2t,n-3t\rangle$ in a way that the $\lambda_i$ remain within
suitable intervals. This fact, which is quite natural, is also
consistent with the results of Stanton \cite{stanton} in the case of
arbitrary partitions with parts lying within certain intervals. We
only remark here that in general, however, an actual ``interval
property'' (see e.g. \cite{Za10}) does not hold in this context. In
fact, it is easy to check, e.g. using Maple, that the rank-generating
function $F_{\lambda}$ is nonunimodal when $\lambda=\langle
19,16,11,8\rangle$ and $\lambda=\langle 19,16,9,8\rangle$, while it is
unimodal for $\lambda=\langle 19,16,10,8\rangle$.
\end{remark}

\section{Other shapes}

We begin by presenting a new $q$-analog of the binomial coefficients. For any integers $a$ and $b$ such that
$1\le b\le a/2$, define  $F_{\lambda}=\binom{a}{b}^q$ to be the
rank-generating function of the partitions with distinct
parts contained inside $\lambda=\langle
a-1,a-3,\dots,a-(2b-1)\rangle$. In Proposition \ref{ab}, we will show that $\binom{a}{b}^q$ is a $q$-analog of the binomial coefficient $\binom{a}{b}$. After discovering an independent proof
of this fact, we found out that it can also be easily deduced from a
theorem of R. Proctor concerning shifted plane partitions (see
\cite[Theorem 1]{Pr2}). However, since our argument, unlike Proctor's,
is combinatorial, we include a sketch of it below for completeness. We
use the following lemma without proof, since it is equivalent to the
well-known fact that the number of standard Young tableaux with $a$
boxes, at most two rows, and at most $b$ boxes in the second row is
$\binom ab$.

\begin{lemma}\label{binary}
Fix integers $a$ and $b$ such that $1\le b\le a/2$. Then the number of
binary sequences of length $a$ containing at most $b$ 1's, and such
that no initial string  contains more 1's than 0's, is
$\binom{a}{b}$. 
\end{lemma}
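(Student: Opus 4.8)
The plan is to prove Lemma~\ref{binary} by exhibiting a bijection between the two families counted by $\binom{a}{b}$: on one side, standard Young tableaux (SYT) with $a$ boxes, at most two rows, and at most $b$ boxes in the second row; on the other side, the binary sequences of length $a$ with at most $b$ ones in which every initial segment contains at least as many zeros as ones. First I would recall why the SYT count is $\binom{a}{b}$: a two-row SYT is determined by choosing which of the entries $1,\dots,a$ go into the second row, and the column-strictness and row-strictness constraints translate exactly into the ballot condition that each prefix of $\{1,\dots,a\}$ contains at least as many first-row entries as second-row entries; since the second row has between $0$ and $b$ entries, the number of such choices is $\sum_{j=0}^b(\text{ballot sequences with }j\text{ down-steps})$, which is well known to telescope to $\binom{a}{b}$ (this is the ``at most $b$ in the second row'' refinement of the Catalan/ballot identity). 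This is the fact the statement says we may use without proof.

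The second step is the translation to binary sequences: given such a tableau, read off the sequence $\eb_1\eb_2\cdots\eb_a$ where $\eb_k=1$ if $k$ lies in the second row and $\eb_k=0$ if $k$ lies in the first row. Row-insertion constraints force that in any prefix the number of $1$'s (second-row entries placed so far) never exceeds the number of $0$'s (first-row entries placed so far), which is exactly the condition ``no initial string contains more $1$'s than $0$'s''; and the total number of $1$'s is the length of the second row, which is at most $b$. Conversely, any binary sequence satisfying these two conditions reconstructs a unique two-row SYT by the same recipe — the prefix condition guarantees that when we place $k$ in the second row there is already a box above it in the first row, and the increasing labels automatically give row- and column-strictness. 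Hence the two sets are in explicit bijection, so the binary sequences are counted by $\binom ab$ as well.

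I do not expect a serious obstacle here: both directions of the correspondence are routine once the SYT count is taken as given, and the paper itself flags this lemma as essentially a restatement of a well-known fact. The only point requiring a line of care is making sure the inequality in ``at most $b$ ones'' matches the ``at most $b$ boxes in the second row'' bound rather than an exact equality — i.e. that we really are summing over all shapes $(a-j,j)$ with $0\le j\le b$, and that the ballot condition is the non-strict one (zeros weakly dominate ones in every prefix, corresponding to the fact that a second-row box may sit directly below, but never to the right of the end of, the first row). With that bookkeeping in place the proof is complete; in fact, since we are permitted to cite the SYT enumeration, the argument reduces to the single paragraph of bijective translation above.
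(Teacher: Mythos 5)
Your proposal is correct and matches the paper's treatment: the paper gives no proof of Lemma~\ref{binary}, stating only that it is equivalent to the well-known count of two-row standard Young tableaux with at most $b$ boxes in the second row, and your bijection (second-row entries $\leftrightarrow$ positions of 1's, with the prefix/ballot condition) is exactly that equivalence made explicit. The bookkeeping you flag (summing over shapes $(a-j,j)$, $0\le j\le b$, and the non-strict prefix inequality) is handled correctly, so nothing is missing.
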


\begin{proposition}\label{ab}
Fix integers $a$ and $b$ such that $1\le b\le a/2$, and let
$\lambda=\langle a-1,a-3,\dots,a-(2b-1)\rangle$. Then the number of
partitions with distinct parts lying inside $\lambda$ is
$\binom{a}{b}$.
\end{proposition}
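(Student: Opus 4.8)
The plan is to set up a bijection between partitions with distinct parts lying inside $\lambda=\langle a-1,a-3,\dots,a-(2b-1)\rangle$ and the binary sequences counted by Lemma~\ref{binary}, so that Proposition~\ref{ab} follows immediately. First I would record the shape of $\lambda$ concretely: its shifted Ferrers diagram has $b$ rows, with the $i$th row having $a-(2i-1)$ cells and being indented $i-1$ cells from the left, so the rightmost cell of every row sits in column $a-i$ (counting from $1$); in particular all rows end in distinct columns and the diagram occupies columns $1$ through $a-1$. A partition with distinct parts $\mu=\langle\mu_1>\mu_2>\dots>\mu_k\rangle$ (with $k\le b$) fits inside $\lambda$, as a shifted diagram, precisely when $\mu_i\le \lambda_i=a-(2i-1)$ for each $i$, equivalently when the $i$th row of $\mu$'s shifted diagram, indented $i-1$ cells, stays within the $i$th row of $\lambda$.

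The key step is to encode such a $\mu$ by a lattice path / binary word of length $a$. I would read off the boundary of the shifted diagram of $\mu$ (padded by empty rows up to $b$ rows) inside the ambient shifted staircase region, recording a step of one type for each vertical unit and the other for each horizontal unit along the outer boundary, exactly as one does for ordinary partitions inside a staircase. Concretely: walk the lattice-path boundary separating the cells of $\mu$ from the cells of $\lambda\setminus\mu$, writing a $1$ for each ``down'' step and a $0$ for each ``right'' step; because $\lambda$ is a shifted (near-)staircase, this boundary has a fixed total length $a$, the number of $1$'s equals the number of nonempty rows and hence is at most $b$, and the shiftedness/indentation constraint $\mu_i\le a-(2i-1)$ translates exactly into the ballot condition that no initial segment has more $1$'s than $0$'s. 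I would verify this translation carefully on small cases (e.g. recovering $F_{\langle 4,2,1\rangle}$, where $a=5$, $b=3$) to pin down the precise correspondence and the off-by-one conventions.

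I expect the main obstacle to be getting the encoding and the inequalities to line up cleanly: one must check that (i) every $\mu$ inside $\lambda$ produces a valid word, (ii) the map is injective, and (iii) every valid word of length $a$ with at most $b$ ones and satisfying the ballot condition arises from some $\mu$ with distinct parts inside $\lambda$ — and in particular that the ``distinct parts'' condition is automatically enforced by the geometry of the \emph{shifted} diagram rather than needing a separate argument. The indentation of successive rows is exactly what forces strict decrease of parts, so the ballot inequality ``$\#0$'s $\ge \#1$'s in every prefix'' should correspond bijectively to ``$\mu_i\le a-(2i-1)$,'' but I would want to state this equivalence as an explicit computation: if the $1$'s (down steps) occur after $p_1\le p_2\le\dots\le p_k$ zero-steps respectively, then $\mu_i = p_{k+1-i} - (\text{something linear in }i)$, and the ballot condition $p_j\ge j$ becomes $\mu_i\le a-(2i-1)$. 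Once that dictionary is in hand, Lemma~\ref{binary} gives the count $\binom{a}{b}$ and the proof is complete; I would also note in passing that keeping track of $|\mu|=\sum\mu_i$ under this bijection is what yields the $q$-analog $\binom{a}{b}^q$ used later, though that refinement is not needed for the bare enumerative statement.
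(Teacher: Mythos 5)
Your proposal is correct and is essentially the paper's own argument: encode $\mu$ by the length-$a$ binary word recording which values occur as parts, read from the largest value down (the paper's $W_\mu=w_aw_{a-1}\cdots w_1$), note it has at most $b$ ones, check that containment in $\lambda$ (i.e.\ $\mu_i\le a-(2i-1)$) is exactly the ballot condition, and invoke Lemma~\ref{binary}. The one convention you still need to fix explicitly is the reading direction: the prefix condition must be checked starting from the largest value (e.g.\ for $\mu=\langle 1\rangle$ the word read smallest-value-first violates the ballot condition even though $\mu$ fits), after which your dictionary between zero-counts and the inequalities $\mu_i\le a-(2i-1)$ is exactly the ``standard exercise'' the paper alludes to.
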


\begin{proof} Let $\mu=\langle
  \mu_1,\dots,\mu_t\rangle$ be 
  a partition with distinct parts contained inside
  $\lambda=\langle a-1,\dots,a-(2b-1)\rangle$. In particular,  $a-1\ge
  \mu_1>\dots >\mu_t\ge 1$, where $t\le b$. 

We associate to $\mu$ a binary sequence of length $a$, say
$W_{\mu}=w_aw_{a-1}\cdots w_1$,  such  that $w_i=1$ if $i$ is a part
of $\mu$, and $w_i=0$ otherwise. (Notice that $w_a$ is always 0, since
$a>\mu_1$ for all partitions $\mu$.)  

By definition, the number of 1's in $W_{\mu}$ is  $t\le b$, and since
the parts of $\lambda$ differ by exactly 2, it is a standard exercise
to check  that the above correspondence is indeed a bijection between
our partitions $\mu$ and those binary sequences $W_{\mu}$ where no
initial string of $W_{\mu}$ contains more 1's than 0's. Thus, by Lemma
\ref{binary}, the number of partitions $\mu$ is $\binom{a}{b}$, as
desired. 
\end{proof}



For example,
$$F_{\langle 4,2\rangle}=\binom{5}{2}^q=1+q+q^2+2q^3+2q^4+2q^5+q^6;$$
$$F_{\langle 5,3\rangle}=\binom{6}{2}^q=1+q+q^2+2q^3+2q^4+3q^5+2q^6+2q^7+q^8;$$
$$F_{\langle 8,6,4,2\rangle}=\binom{9}{4}^q=1+q+q^2+2q^3+2q^4+3q^5+4q^6+5q^{7}+6q^{8}+7q^{9}+8q^{10}+$$$$9q^{11}+10q^{12}+11q^{13}+12q^{14}+11q^{15}+11q^{16}+10q^{17}+7q^{18}+4q^{19}+q^{20}.$$

Notice that, for $b>1$, our $q$-analog $\binom{a}{b}^q$ of the binomial
coefficient $\binom{a}{b}$ is always different from the $q$-binomial
coefficient $\binom{a}{b}_q$ (in fact, it is easy to show
that $\binom{a}{b}^q$ is never  symmetric  for $b>1$). While
$\binom{a}{b}_q$ is well known to be unimodal, the case $t=2$ of
Theorem \ref{at} proves that, in general, unimodality may fail quite
badly for $\binom{a}{b}^q$, even when $b=4$. The smallest such
nonunimodal example is
$$F_{\langle 9,7,5,3\rangle}=\binom{10}{4}^q=1+q+q^2+2q^3+2q^4+3q^5+4q^6+5q^{7}+6q^{8}+8q^{9}+9q^{10}+10q^{11}+12q^{12}+13q^{13}$$$$+15q^{14}+16q^{15}+\textbf{17}q^{16}+\textbf{16}q^{17}+\textbf{17}q^{18}+15q^{19}+14q^{20}+11q^{21}+7q^{22}+4q^{23}+q^{24}.$$

However, we conjecture that the following fact is true, which is a
special case of a conjecture that we will state later.

\begin{conjecture}\label{centr}
Our $q$-analogs of the \emph{central} binomial coefficients are all 
unimodal. That is,  $\binom{2n+1}{n}^q$ and $\binom{2n}{n}^q$ are both
unimodal, for any $n\ge 1$. 
\end{conjecture}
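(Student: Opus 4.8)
The plan is to push the decomposition behind the proof of Theorem~\ref{uni} to its general form. Write $\lambda$ for the relevant shifted shape — by Proposition~\ref{ab} these are $\lambda=\langle 2n,2n-2,\dots,2\rangle$ for $\binom{2n+1}{n}^q$ and $\lambda=\langle 2n-1,2n-3,\dots,1\rangle$ for $\binom{2n}{n}^q$. Grading the partitions $\mu\subseteq\lambda$ with distinct parts by their number of parts $t$ and peeling off, exactly as in Theorem~\ref{uni}, the shifted staircase $\langle t,t-1,\dots,1\rangle$, one obtains
$$F_{\lambda}(q)=\sum_{t\ge 0}q^{\binom{t+1}{2}}\,P^{(t)}_{\lambda}(q),$$
where $P^{(t)}_{\lambda}$ is the rank-generating function of the ordinary partitions with at most $t$ parts contained inside the shape $\tau^{(t)}$ obtained from $\lambda$ by deleting that staircase. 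When $\lambda=\langle n,n-1,n-2,n-3\rangle$ the common difference ($1$) of the shape cancels that of the staircase, so each $\tau^{(t)}$ is an honest rectangle and, after recombining via the $q$-Pascal identities, one recovers equation~(\ref{4}) with its genuine $q$-binomial coefficients. For the two central shapes the common difference is $2$, so the $\tau^{(t)}$ are genuine difference-$1$ \emph{trapezoids}, interpolating (as $t$ runs from its largest value down to $1$) between a staircase and a near-rectangle, and there are about $n$ of them.

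Two things then have to be proved. First, \emph{each layer $q^{\binom{t+1}{2}}P^{(t)}_{\lambda}$ is unimodal}: this is the unimodality of the generating function of partitions inside a difference-$1$ trapezoid, which I would try to obtain either from a symmetric chain decomposition of the associated trapezoidal poset or, more elementarily, from a secondary peeling writing $P^{(t)}_{\lambda}$ as a nonnegative combination of $q$-binomial coefficients with overlapping supports. Second — and this is the crux — \emph{the layers must stack up unimodally}: one needs to locate the mode of each $q^{\binom{t+1}{2}}P^{(t)}_{\lambda}$, check that these modes are nondecreasing in $t$ up to the global peak of $F_{\lambda}$, and then control the successive differences $c_i-c_{i-1}$ of $F_{\lambda}$ one layer at a time, in the spirit of the bookkeeping around degree $2n$ at the end of the proof of Theorem~\ref{uni} and of the difference estimates of Lemma~\ref{fac} — but now with $\Theta(n)$ interacting layers rather than two.

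The main obstacle is precisely this last step, and the underlying reason is the lack of rank-symmetry: $F_{\lambda}$ is never symmetric for our non-staircase $\lambda$, so the usual high-powered tools — O'Hara's bijection, the hard Lefschetz / $\mathfrak{sl}_2$ argument that settles the staircase, the Stanton--West symmetric chain decompositions — do not apply, and one genuinely has to track both the position of the mode and the sign of every difference. A complementary route worth pursuing in parallel is linear-algebraic: realize the distributive lattice of the $\mu\subseteq\lambda$ (it is $J(Q)$ for an explicit planar poset $Q$, and in the odd case it is even an order ideal of $M(2n-1)$) and look for an order-raising operator with a one-sided Lefschetz property, i.e.\ injectivity of $U^{k}$ from rank $i$ to rank $i+k$ for every $i$ below the peak, which would give unimodality without symmetry; this is also the setting in which one could hope to invoke Proctor's results on shifted plane partitions already used for Proposition~\ref{ab}. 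Because the conjecture sits, within the broader conjecture stated later, right next to the unimodality of the staircase posets $M(n)$ — for which no combinatorial proof is known — I would expect a complete argument to need some genuinely new structural input rather than bookkeeping alone.
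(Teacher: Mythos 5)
This statement is Conjecture~\ref{centr}: the paper offers no proof of it, and it remains open, so there is no argument of the authors to compare yours against. What you have written is accordingly not a proof but a program, and you say so yourself. Your structural setup is correct and worth keeping: by the definition preceding Proposition~\ref{ab} the relevant shapes are indeed $\langle 2n,2n-2,\dots,2\rangle$ and $\langle 2n-1,2n-3,\dots,1\rangle$; grading by the number of parts $t$ and subtracting $(t,t-1,\dots,1)$ does give $F_{\lambda}(q)=\sum_{t}q^{\binom{t+1}{2}}P^{(t)}_{\lambda}(q)$ with $P^{(t)}_{\lambda}$ counting ordinary partitions with at most $t$ parts inside a difference-one trapezoid; and in the difference-one case this recombines via the $q$-Pascal identities to equation~(\ref{4}), as you check.

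The gaps, however, are genuine and you have not closed either of them. First, unimodality of partitions inside difference-one trapezoids with a bounded number of rows is not something you can cite: Stanton's theorem only covers $b\le 3$ rows, and a symmetric chain decomposition or a ``nonnegative combination of $q$-binomials'' for these trapezoidal posets is exactly the kind of statement that would itself require a new argument. Second, and more seriously, the layer-stacking step cannot be a matter of bookkeeping alone, because your decomposition applies verbatim to every $\binom{a}{b}^q$: for instance $\binom{10}{4}^q=F_{\langle 9,7,5,3\rangle}$ decomposes into the same kind of trapezoid layers, yet it is nonunimodal by Theorem~\ref{at}. So any successful version of your step two must use the centrality $b=n$ (equivalently $t$ ranging up to roughly $a/2$, which is what makes the shape close to a shifted staircase) in an essential, quantitative way, and your proposal never identifies where or how that hypothesis would enter; a sum of $\Theta(n)$ unimodal layers with drifting modes can easily fail to be unimodal, as the four-part examples show. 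Your closing assessment --- that new structural input (for example a one-sided Lefschetz-type injectivity for the lattice $J(Q)$, which need not be rank-symmetric here) is needed --- is reasonable, but it is an acknowledgment that the conjecture is untouched by the outline rather than a step toward proving it.
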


For partitions with three parts, all rank-generating functions are
unimodal. We will provide a bijective proof of this result, assuming
the corresponding theorem of Stanton for arbitrary partitions
(\cite[Theorem 7]{stanton}). 

\begin{lemma}[Stanton]\label{3} 
If $\lambda$ is any arbitrary partition of length $b\le 3$, then the
rank-generating function $G_{\lambda}$ is unimodal. 
\end{lemma}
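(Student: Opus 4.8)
This is a theorem of Stanton \cite[Theorem 7]{stanton}, so the quickest route is simply to quote it; but the following self-contained argument is the one I would write, since it isolates what is really being used. The cases $b\le 2$ I would dispose of by an explicit computation of the coefficient sequence. For $b=1$ there is nothing to prove, as $G_{(\lambda_1)}(q)=1+q+\cdots+q^{\lambda_1}$. For $b=2$, assume $\lambda_1\ge\lambda_2$ and group the partitions $(\mu_1,\mu_2)\subseteq\lambda$ by the value of $\mu_2$: this writes $G_{\lambda}$ as a sum of ``interval polynomials'' $q^{2\mu_2}+q^{2\mu_2+1}+\cdots+q^{\lambda_1+\mu_2}$, so the coefficient of $q^m$ equals
\[
\min(\lambda_2,\lfloor m/2\rfloor)-\max(0,\,m-\lambda_1)+1
\]
on the support $0\le m\le\lambda_1+\lambda_2$. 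A short case analysis, according to whether $2\lambda_2\le\lambda_1$ or not, shows this sequence first increases (in a staircase pattern), is then constant on a possibly empty plateau, and finally strictly decreases; in particular it is unimodal.

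The real content is $b=3$, and the plan is to reduce it to the two-row case. Conditioning on the value $r=\mu_3$ of the smallest part and shifting the two larger parts down by $r$ gives the identity
\[
G_{(\lambda_1,\lambda_2,\lambda_3)}(q)=\sum_{r=0}^{\lambda_3}q^{3r}\,G_{(\lambda_1-r,\;\lambda_2-r)}(q),
\]
where, since $r\le\lambda_3\le\lambda_2$, each $(\lambda_1-r,\lambda_2-r)$ is a genuine two-row shape with the \emph{same} difference $\lambda_1-\lambda_2$, and each summand is unimodal with the explicit staircase--plateau--descent shape found above. It then remains to prove that this particular sum of shifted unimodal polynomials is itself unimodal.

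This last step is the main obstacle: sums of shifted unimodal polynomials are not unimodal in general, so one must exploit the rigidity of the pieces. I would substitute the closed formula for the two-row coefficients into the identity, obtaining the coefficient $g_m$ of $q^m$ in $G_{(\lambda_1,\lambda_2,\lambda_3)}$ as an explicit piecewise-linear function of $m$, and then check that its forward differences $g_m-g_{m-1}$ change sign exactly once. The bookkeeping breaks into a bounded number of regimes, cut out by the positions of $m$ relative to $\lambda_1$, $\lambda_2$, $3\lambda_3$, and the breakpoints of the floor/plateau terms; within each regime $g_m$ is a sum of a few linear and staircase terms whose monotonicity is transparent, and one only has to verify that there is no upward jump across a regime boundary.

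Alternatively, and closer to Stanton's own method, one can avoid the formula entirely by producing, in the range where $G_{\lambda}$ ought to be nondecreasing, an explicit injection from the partitions of $m-1$ inside $\lambda$ into the partitions of $m$ inside $\lambda$ (adjoining one box to a suitably chosen row), together with the mirror-image construction beyond the peak. Either way the argument is elementary; it is only the case-checking that takes the work, which is presumably why the paper prefers to cite \cite{stanton} rather than reproduce it.
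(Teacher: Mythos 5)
The paper gives no proof of this lemma at all: it is stated with the attribution to Stanton and used as a black box via the citation \cite[Theorem 7]{stanton}, which is exactly the first route you propose, so on that score you coincide with the paper. Your supplementary ``self-contained'' argument is set up correctly --- the two-row coefficient formula $\min(\lambda_2,\lfloor m/2\rfloor)-\max(0,m-\lambda_1)+1$ and the decomposition $G_{(\lambda_1,\lambda_2,\lambda_3)}(q)=\sum_{r=0}^{\lambda_3}q^{3r}\,G_{(\lambda_1-r,\lambda_2-r)}(q)$ obtained by conditioning on the smallest part are both right --- but the entire content of the $b=3$ case, namely the regime-by-regime check that the forward differences change sign only once (or, in the alternative, the actual construction of the injections on both sides of the peak), is only announced, not carried out. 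As you yourself observe, sums of shifted unimodal polynomials are not unimodal in general, so until that case analysis is executed the sketch is a plan rather than a proof. If you intend the lemma to rest on the citation, as the paper does, that is perfectly adequate; just do not present the sketch as an independent proof without completing its final step.
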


\begin{lemma}\label{p-1} 
Consider the partition $\lambda=(p,r,s)$, and let
$G_{(p,r,s)}(q)=\sum_{i=0}^{p+r+s}a_iq^i$. We have: 
\begin{enumerate}
\item If $2\le p\le 2r+s$, then $a_{p-1} <a_p$:
\item If $p=1$ or $p\ge 2r+s+1$, then $a_{p-1} =a_p$ and $a_i\ge
  a_{i+1}$ for all $i\ge p-1$. 
\end{enumerate}
\end{lemma}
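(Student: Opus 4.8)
The plan is to work directly with the explicit rank‑generating function of partitions inside a three‑row Ferrers shape. Write $G_{(p,r,s)}(q)=\sum_i a_i q^i$; here $a_i$ counts partitions $\mu$ with at most three parts, $\mu_1\le p$, $\mu_2\le r$, $\mu_3\le s$, and $|\mu|=i$. The cleanest way to control the low‑degree coefficients is to stratify by the first part $\mu_1=j$: for $0\le j\le p$, the number of such partitions of $i$ with $\mu_1=j$ equals the number of partitions of $i-j$ into at most two parts with second part $\le\min(j,r)$ and third part $\le\min(j,s)$ — equivalently the number of partitions fitting inside the $2$-row box $(\min(j,r),\min(j,s))$. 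Denote this count $h_j(i-j)$. Then $a_i=\sum_{j=0}^{p}h_j(i-j)$, and $a_p-a_{p-1}=\sum_{j=0}^p\big(h_j(p-j)-h_j(p-1-j)\big)$.

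First I would handle the range $j<p$: since $p-1-j\ge 0$ and $p-j\le p\le$ (whatever bound is relevant), each difference $h_j(p-j)-h_j(p-1-j)$ is the increment of a two‑row box generating function $\binom{c+d}{d}_q$ in a degree at most its midpoint, hence $\ge 0$ by unimodality of the Gaussian polynomial, and in fact the total over $j=0,\dots,p-1$ already records at least the constant‑term contributions. The term $j=p$ contributes $h_p(0)-h_p(-1)=1-0=1$. So $a_p-a_{p-1}\ge 1>0$ as soon as all the lower‑$j$ increments are genuinely nonnegative, i.e. as soon as we are below the midpoints of all the relevant two‑row boxes; the inequality $p\le 2r+s$ is exactly the condition guaranteeing that $p$ (and hence each $p-j$) does not exceed the midpoint of the box for $\mu_1=j$ when $j=p$ is excluded and the box stabilizes to $(r,s)$. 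This gives part (1). For part (2): if $p=1$ the only partitions of degree $\le 1$ are $\emptyset$ and $(1)$, so $a_0=a_1=1$ and the sequence is $(1,1,\dots)$ weakly decreasing; if $p\ge 2r+s+1$, then for every partition inside the box the first part is automatically the "free" coordinate, so $G_{(p,r,s)}(q)=(1+q+\cdots+q^p)\cdot\binom{r+s}{s}_q$ (taking $s\le r$), and the claimed behaviour — $a_{p-1}=a_p$ and $a_i\ge a_{i+1}$ for $i\ge p-1$ — follows from the elementary shape of a polynomial of this product form: multiplying a polynomial of degree $r+s\le p-1$ by $1+\cdots+q^p$ produces a plateau from degree $r+s$ through degree $p$ and then a reversed copy of the partial sums, which is weakly decreasing.

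The main obstacle is the bookkeeping in part (1): one must verify that the bound $p\le 2r+s$ is precisely what keeps $p-j$ at or below the midpoint $\tfrac12(\min(j,r)+\min(j,s)+j\cdot 0)$ — more carefully, below the midpoint of the two‑row box attached to first part $j$ — uniformly in $j$, so that every summand in $a_p-a_{p-1}$ is nonnegative rather than merely their sum. A possible shortcut, which I would try first, is to invoke Lemma~\ref{3} itself: $G_{(p,r,s)}$ is unimodal, so its coefficients strictly increase up to the peak and the only content of part (1) is to locate the peak strictly beyond degree $p$, and of part (2) to locate it at or before degree $p-1$; one then needs only the single clean computation $a_p-a_{p-1}=1+(\text{sum of nonnegative two‑row increments})$ together with the case split on whether $p$ exceeds $2r+s$, avoiding any delicate midpoint estimate. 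I expect this route to reduce the lemma to a short, essentially formal argument built on unimodality of Gaussian polynomials plus the stratification by $\mu_1$.
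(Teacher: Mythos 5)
Your plan has two genuine gaps, one in each part. For part (1), the whole argument hinges on the claim that, after stratifying by the first part $\mu_1=j$, every stratum increment $h_j(p-j)-h_j(p-1-j)$ is nonnegative once $p\le 2r+s$, so that the $j=p$ stratum's contribution of $1$ forces $a_p>a_{p-1}$. This claim is false, and the hypothesis $p\le 2r+s$ does not rescue it uniformly in $j$: take $(p,r,s)=(7,3,3)$ (so $p\le 2r+s=9$) and $j=2$; the stratum counts partitions of $5$ and of $4$ inside the two-row shape $(2,2)$, giving $0-1=-1$. The total $a_p-a_{p-1}$ is still positive there, but only because negative strata are compensated by other strata, which is exactly the bookkeeping you flag as ``the main obstacle'' and never resolve; the proposed shortcut via Lemma~\ref{3} does not help, since it feeds back into the same unproved ``sum of nonnegative two-row increments'' identity. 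A secondary issue is that the stratum generating functions are rank-generating functions of two-row \emph{Ferrers shapes} $(\min(j,r),\min(j,s))$, not Gaussian polynomials $\binom{c+d}{d}_q$; they are not symmetric, so ``at or below the midpoint'' is not the criterion for a nonnegative increment even when one is needed.

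For part (2), the asserted factorization $G_{(p,r,s)}(q)=(1+q+\cdots+q^p)\binom{r+s}{s}_q$ when $p\ge 2r+s+1$ is also false: for $(p,r,s)=(4,1,1)$ the left side is $1+q+2q^2+3q^3+3q^4+2q^5+q^6$ while the right side (with either $\binom{r+s}{s}_q$ or the correct two-row factor $1+q+q^2$) has coefficient sum $15\ne 13$; the first part is not a free coordinate in low degrees, and again the second factor should be $G_{(r,s)}$, not a $q$-binomial. A corrected version of your idea does work in the range that matters, namely for $i\ge p-1\ge 2r+s$ one has $a_i=\sum_{m\ge i-p} g_m$ where $g_m$ counts partitions of $m$ inside $(r,s)$, a weakly decreasing tail sum with $a_{p-1}=a_p$, but that is not what you wrote and it needs the range restriction to be justified. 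For comparison, the paper's proof avoids all of this with a single observation: the map $\mu\mapsto\mu+(1,0,0)$ injects partitions of $p-1$ inside $\lambda$ into partitions of $p$, missing exactly those $\theta$ with $\theta_1=\theta_2$; one then exhibits such a $\theta$ precisely when $2\le p\le 2r+s$ (e.g.\ $\theta=(d,d,\epsilon)$), and when $p\ge 2r+s+1$ shows none exists and that the reverse map $\alpha\mapsto\alpha-(1,0,0)$ is an injection in all degrees $\ge p$, giving the nonincreasing tail. I would recommend reworking your argument along those lines, or else repairing part (1) by an argument that controls the \emph{total} difference $a_p-a_{p-1}$ rather than each stratum separately.
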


\begin{proof}
Since the largest part of $\lambda$ is $p$, notice that there is a
natural injection $\phi$ from the set $A_{p-1}$ of partitions $\mu$
of $p-1$ contained inside the Ferrers diagram of $\lambda$ to the set $A_p$, where $\theta=\phi(\mu)=\mu+(1,0,0)$.  

Thus, for any $\lambda=(p,r,s)$, we have $a_{p-1} \le
a_p$. Clearly, equality holds if and only if there exists no partition
$\theta=(\theta_1,\theta_2,\theta_3)$ in $A_p$ such that
$\theta_1=\theta_2$, since these are the only partitions not in the
image of the map $\phi$.

It is a standard exercise now to show that if $2\le p\le 2r+s$, then there
always exists a partition $\theta \in A_p$ such that
$\theta_1=\theta_2$. Indeed, if $p=2r+s$, then we can pick
$\theta=(r,r,s)$; for $2\le p<2r+s$, one can for instance first decrease
the value of $s$ until it reaches 0 (i.e., until $p$ is down to $2r$),
and then consider the partitions $\theta=(d,d,\epsilon)$, where $d$
decreases by 1 at the time and $\epsilon$ is either 0 or 1, depending
on the parity of $p$. This proves part (1). 

In order to prove (2), notice that the case $p=1$ is trivial, since
here  $\lambda=(p,r,s)=(1,1,1)$. Thus let $p\ge 2r+s+1$. Then we have
that no partition $\theta$ of $p$ inside $\lambda=(p,r,s)$ can satisfy
$\theta_1=\theta_2$, since $\theta_3\le s$, $\theta_2\le r$, and
therefore $\theta_1\ge r+1$. Thus, this is exactly the case where
$a_{p-1} =a_p$, and in order to finish the proof of the lemma, now it
suffices to show that $a_i\ge a_{i+1}$ for all $i\ge p-1$.  

But this can be done in a symmetric fashion to the above argument, by
defining a map $\psi$ from $A_{i+1}$ to $A_i$ such that
$\beta=\psi(\alpha)=\alpha-(1,0,0)$. Since  $p\ge 2r+s+1$, it is easy
to see that $\psi$ is well defined and injective. Thus, $a_i\ge
a_{i+1}$ for all $i\ge p-1$, as we wanted to show. 
\end{proof}

\begin{theorem}
If $\lambda$ is any partition with length $b\le 3$, then the
rank-generating function $F_{\lambda}$ is unimodal. 
\end{theorem}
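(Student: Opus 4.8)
The plan is to reduce the unimodality of $F_{\lambda}$ for partitions $\lambda$ with distinct parts and length $b\le 3$ to Stanton's Lemma~\ref{3}, by the same ``remove a staircase'' trick used in the proof of Theorem~\ref{uni}. Given $\lambda=\langle \lambda_1,\lambda_2,\lambda_3\rangle$ with $\lambda_1>\lambda_2>\lambda_3$, a partition with distinct parts inside the shifted diagram of $\lambda$ has at most three parts; I would stratify the count by how many parts it has. A partition with distinct parts $\langle\mu_1,\mu_2,\mu_3\rangle$ inside $\lambda$ corresponds, upon subtracting the staircase $\langle 2,1,0\rangle$, to an \emph{arbitrary} partition $(\mu_1-2,\mu_2-1,\mu_3)$ inside the straight shape $(\lambda_1-2,\lambda_2-1,\lambda_3)$; similarly partitions with (at most) two distinct parts inside $\lambda$ correspond, after subtracting $\langle 1,0\rangle$, to arbitrary partitions inside $(\lambda_1-1,\lambda_2)$; and the empty and one-part partitions are handled trivially. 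This yields a decomposition
$$F_{\lambda}(q)=1+q\,G_{(\lambda_1-1,\lambda_2)}(q)+q^{3}\,G_{(\lambda_1-2,\lambda_2-1,\lambda_3)}(q),$$
analogous to~(\ref{4}), where each $G$ is unimodal by Lemma~\ref{3}. (One must take care with degenerate cases where $\lambda$ has fewer than three parts, i.e.\ some $\lambda_i=0$, and where $\lambda_2-1=0$ or $\lambda_3=0$ so that a ``three-part'' shape collapses; these reduce to the $b\le 2$ situation, which I would dispatch first.)

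Next I would assemble the three unimodal pieces. Write $G_{(\lambda_1-1,\lambda_2)}(q)=\sum d_i q^i$ and $q^{3}G_{(\lambda_1-2,\lambda_2-1,\lambda_3)}(q)=\sum c_i q^i$. The polynomial $\binom{a}{2}_q$-type piece $q\,G_{(\lambda_1-1,\lambda_2)}$ is symmetric and unimodal, supported in a comparatively short range, while $q^{3}G_{(\lambda_1-2,\lambda_2-1,\lambda_3)}$ is symmetric and unimodal and supported in a longer range. The heart of the argument is to show that the sum $1+\sum_i(c_i+d_i)q^i$ cannot increase after decreasing. Since both $\{c_i\}$ and $\{d_i\}$ are unimodal, the only danger is in a window where the short polynomial $q\,G_{(\lambda_1-1,\lambda_2)}$ is still rising (or has just peaked) while the long polynomial $q^{3}G_{(\lambda_1-2,\lambda_2-1,\lambda_3)}$ has already begun to fall, or vice versa near the low-degree end. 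To control this I would use Lemma~\ref{p-1}: part~(1) says $G_{(p,r,s)}$ strictly increases up through degree $p$ (when $p\le 2r+s$), and part~(2) gives the location and flatness of the plateau and the subsequent monotone decrease when $p\ge 2r+s+1$; combined with the symmetry of each $G$, this pins down precisely where each piece is increasing, flat, or decreasing, in terms of the three parameters $\lambda_1,\lambda_2,\lambda_3$. A short case analysis on the relative sizes of $\lambda_1,\lambda_2,\lambda_3$ (governing which alternative of Lemma~\ref{p-1} applies to the two-part and three-part shapes) then shows the overlap region is always one where at least one piece is still (weakly) increasing fast enough to absorb the other's decrease, so no net ``increase after a decrease'' occurs.

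The main obstacle I anticipate is exactly this last bookkeeping: verifying that in every parameter regime the increasing step of the longer polynomial near its ascent dominates any decrease of the shorter one (and symmetrically at the other end), i.e.\ that the two symmetric unimodal sequences are ``in phase'' enough that their sum stays unimodal. Lemma~\ref{p-1} is tailored to give precisely the quantitative increase/plateau information needed, so the obstacle is organizational rather than deep — one must enumerate the handful of cases ($\lambda_1$ large relative to $\lambda_2,\lambda_3$ versus not, and similarly for the sub-shapes) and in each check the inequality at the one or two critical degrees where the behaviors of the two pieces change. As in Theorem~\ref{uni}, I would also note that small values (where some shape degenerates or where asymptotic ``$n$ large'' reasoning fails) can be checked directly by hand or by Maple, so the argument only needs to run cleanly in the generic range.
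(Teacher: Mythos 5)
Your overall strategy (peel off a staircase, invoke Stanton's Lemma~\ref{3}, then control the splice with Lemma~\ref{p-1}) is in the right spirit, but the decomposition you write down is false, and the step where the pieces are recombined --- which is the actual crux --- is only asserted. Concretely, the identity $F_{\lambda}(q)=1+q\,G_{(\lambda_1-1,\lambda_2)}(q)+q^{3}\,G_{(\lambda_1-2,\lambda_2-1,\lambda_3)}(q)$ double counts every $\mu$ with exactly two parts: adding $\langle 2,1,0\rangle$ back to an arbitrary partition inside $(\lambda_1-2,\lambda_2-1,\lambda_3)$ that has at most two positive parts produces a two-part partition with distinct parts, so the term $q^{3}G_{(\lambda_1-2,\lambda_2-1,\lambda_3)}$ already accounts for all $\mu$ with two \emph{or} three parts, while $q\,G_{(\lambda_1-1,\lambda_2)}$ counts those with one or two parts. (Test $\lambda=\langle 4,2,1\rangle$: your right-hand side sums to $17$ at $q=1$, whereas $F_{\lambda}(1)=12$.) The consistent choices are either $F_{\lambda}=(1+q+\cdots+q^{\lambda_1})+q^{3}G_{(\lambda_1-2,\lambda_2-1,\lambda_3)}$, splitting into ``at most one part'' versus ``at least two parts'' (this is the paper's decomposition, written with $\lambda=\langle p+2,r+1,s\rangle$), or, if you split by the exact number of parts, $F_{\lambda}=1+q\,G_{(\lambda_1-1,\lambda_2)}+q^{6}G_{(\lambda_1-3,\lambda_2-2,\lambda_3-1)}$.

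Even after repairing the identity, your plan for combining the summands has gaps. The polynomials $G_{(p,r)}$ and $G_{(p,r,s)}$ are in general \emph{not} symmetric (e.g.\ $G_{(2,1)}=1+q+2q^{2}+q^{3}$), so the claim that both pieces are ``symmetric and unimodal,'' which you use to locate their ascents and descents, is unavailable; and the concluding ``short case analysis'' showing that no decrease is ever followed by an increase is precisely the nontrivial content and is never carried out. This is where the choice of decomposition matters: in the paper's version the second summand is the flat polynomial $1+q+\cdots+q^{\lambda_1}$, so the only degree at which unimodality of the sum could fail is the single transition where that summand ends, i.e.\ where $G_{(p,r,s)}$ passes from degree $p-1$ to degree $p$ (after the shift by $q^{3}$); Lemma~\ref{p-1} is tailored to exactly that spot, giving either $a_{p-1}<a_{p}$ (so no decrease occurs there) or $a_{p-1}=a_{p}$ with the coefficients nonincreasing from degree $p-1$ on (so the drop by one is harmless). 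With two genuinely varying, asymmetric unimodal summands, Lemma~\ref{p-1} provides no comparable handle, and the inequalities you would need in the overlap region are neither stated nor proved. As written, then, the proposal has a genuine gap both in the decomposition and in the combination step.
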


\begin{proof} When $b=1$ the result is obvious, and when $b=2$ it is
  also easy to check. Indeed, this can be done directly, or by
  observing that if $\lambda=\langle p+1,r\rangle$, for some $p\ge r\ge 1$, then
  one promptly obtains that 
$$F_{\langle p+1,r\rangle}(q)=1+qG_{(p,r)}(q).$$

Thus $F_{\lambda}$ is unimodal, since $G_{(p,r)}(q)$ is unimodal by
Lemma \ref{3}. 

Hence, let $b=3$, and set $\lambda=\langle p+2,r+1,s\rangle$, where $p\ge r\ge s\ge
1$. Clearly, any partition $\mu$ contained inside $\lambda$ has at
most three parts, and it is easy to see that those with at least two
parts are in bijective correspondence with arbitrary partitions
contained inside $(p,r,s)$, by removing the staircase $\langle 2,1\rangle $. 

From this, it  follows that
$$F_{\langle p+2,r+1,s\rangle}(q)=q^3G_{(p,r,s)}(q)+(1+q+q^2+\dots +q^{p+2}).$$

Therefore, since by Lemma \ref{3}, $G_{(p,r,s)}(q)$ is unimodal, we
have that in order to prove the unimodality of $F_{\langle p+2,r+1,s\rangle}(q)$,
it suffices to show that if the coefficients of $G_{(p,r,s)}(q)$
coincide in degree $p-1$ and $p$, then they are nonincreasing from
degree $p-1$ on. But this follows from Lemma \ref{p-1}, thus
completing the proof of the theorem. 
\end{proof}

For partitions $\lambda$ with $b\ge 5$ parts, the scenario becomes
more and more unclear, and it again bears several similarities with
Stanton's situation for arbitrary partitions. For instance, when
$b=5$, all examples we have computed are unimodal, and for $b=6$,
while it is possible to construct nonunimodal partitions, we have not
been able to place them into any infinite family.  

In  particular, even the ``truncated staircases'' $\lambda=\langle n,n-1,
\dots,n-(b-1)\rangle$ in general need not be unimodal when $b<n$. For
instance, the rank-generating function $F_{\lambda}$ is nonunimodal
for $\lambda=\langle 15,14,13,12,11,10\rangle$, $\lambda=\langle 17,16,15,14,13,12\rangle$, and
$\lambda=\langle 19,18,17,16,15,14\rangle$, though this sequence does not continue
in the obvious way. In fact, L. Alpoge \cite{alp} has recently proved, by means of a nice analytic argument, that the truncated staircases are all unimodal for $n$ sufficiently large with respect to $b$, a fact that was conjectured in a previous version of our paper.




As we mentioned earlier, recall that for $b=n$, i.e., for the
staircase partitions $\lambda=\langle b,b-1, \dots,2,1\rangle$, the unimodality of
$F_{\lambda}$ has already been established, though no combinatorial
proof is known to date. The following conjecture attempts to place
this result into a much broader context.

\begin{conjecture}\label{arith}
The rank-generating function $F_{\lambda}$ is unimodal for all
partitions $\lambda=\langle a, a-t, ..., a-(b-1)t\rangle$ such that $t \ge a/b$. In
other words, unimodality holds for all partitions with parts in
arithmetic progression that begin with the smallest possible positive
integer. 
\end{conjecture}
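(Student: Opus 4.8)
The plan is to generalize the ``remove a shifted staircase'' decomposition used in the proofs of Theorem~\ref{uni} and of the length-$\le 3$ theorem. Sorting the partitions with distinct parts $\mu\subseteq\lambda$ by their number of parts $j$ and peeling the shifted staircase $\langle j,j-1,\dots,1\rangle$ off the first $j$ rows, one identifies the $\mu$ with exactly $j$ parts with the \emph{arbitrary} partitions contained in the \emph{straight} Ferrers shape
\[
\beta^{(j)}=\bigl(a-j,\ a-j-(t-1),\ \dots,\ a-j-(j-1)(t-1)\bigr),
\]
whose parts are again in arithmetic progression, now with common difference $t-1$. (One checks readily, for instance against $F_{\langle 4,2\rangle}$ or against the staircase, that this is a bijection and that the removed staircase has $\binom{j+1}{2}$ cells.) Hence
\[
F_{\lambda}(q)=\sum_{j=0}^{b}q^{\binom{j+1}{2}}\,G_{\beta^{(j)}}(q),
\]
and the conjecture is reduced to two tasks: (i) the unimodality of each layer $G_{\beta^{(j)}}$ --- a statement about Stanton's straight-shape rank-generating functions for arithmetic shapes with common difference $t-1$, which is classical when $t=1$ (the $\beta^{(j)}$ are then rectangles, so $G_{\beta^{(j)}}=\binom{a}{j}_q$) and, for small $j$, covered by \cite{stanton}; and (ii) showing that the weighted sum above is itself unimodal.

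Task (ii) is the step I expect to be the main obstacle. A sum of unimodal polynomials is of course not automatically unimodal, and making it so requires the quantitative control exercised near degree $2n$ in the proof of Theorem~\ref{uni}: one must locate the peak of each layer $q^{\binom{j+1}{2}}G_{\beta^{(j)}}$ and bound how fast it rises and falls near that peak, so that consecutive layers overlap without leaving a valley. In the length-four case this control was supplied entirely by Lemma~\ref{fac}; what is needed here is a uniform analogue of that lemma for the relevant $q$-analogs $\binom{\cdot}{\cdot}_q$ and $\binom{\cdot}{\cdot}^q$ attached to larger $b$, and I do not see how to obtain it in general. The hypothesis $t\ge a/b$ should enter precisely at this point: it is exactly the condition under which the parts of $\lambda$ ``begin with the smallest possible positive integer,'' which forces every $\beta^{(j)}$ to be shallow and near-rectangular and its rank-generating function to be as flat and centrally peaked as possible --- the best-case scenario for the layers to stack cleanly. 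Even carrying this out would presuppose a sharpening of the partial results on (i) available for common difference $t-1$, which is itself only partially understood once $t\ge 3$.

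Two caveats shape what a realistic attack should aim for. First, the conjecture contains, as the case $a=b$, $t=1$, the unimodality of the staircase partitions $M(b)$, for which --- as emphasized above --- only a linear-algebra proof is known and a combinatorial one remains open; so no purely bijective argument can be expected to settle the conjecture in full. A sensible intermediate goal is therefore to establish the ``most balanced'' instances first: the central $q$-analogs $\binom{2n+1}{n}^q$ and $\binom{2n}{n}^q$ of Conjecture~\ref{centr} (these are exactly the $t=2$ cases), and more generally the length-four family $\lambda=\langle a,a-t,a-2t,a-3t\rangle$ with $3t+1\le a\le 4t$, by pushing the method of Theorem~\ref{uni} --- an explicit decomposition in the spirit of equation~(\ref{4}) together with a shifted version of Lemma~\ref{fac} for $\binom{a+4}{4}_q$. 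Second, the only route with a plausible chance of treating the staircase case and the whole family uniformly is linear-algebraic: to realize each $F_{\lambda}$ as the Hilbert series of a graded module carrying an order-raising operator with a \emph{one-sided} hard-Lefschetz property (injectivity of the appropriate power of the operator up to, but not symmetrically about, the peak), extending Proctor's $\mathfrak{sl}_2$/weight-space proof of the unimodality of $M(b)$. The difficulties there are that $F_{\lambda}$ is genuinely non-symmetric as soon as $\lambda$ is not a staircase, so the classical hard-Lefschetz machinery does not apply verbatim, and that it is not clear what natural algebraic or representation-theoretic object has these $F_{\lambda}$, for $t\ge 2$, as its Hilbert series.
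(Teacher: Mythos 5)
This statement is Conjecture~\ref{arith}: the paper states it without proof and it remains open, so there is no argument of the authors' to compare yours against, and what you offer is—as you yourself say—a reduction plus a research program rather than a proof. The reduction is correct and worth recording: peeling the shifted staircase $\langle j,j-1,\dots,1\rangle$ off a $\mu\subseteq\lambda$ with exactly $j$ parts does biject such $\mu$ with the arbitrary partitions inside the straight shape $\beta^{(j)}=(a-j,\,a-j-(t-1),\dots,a-j-(j-1)(t-1))$, giving $F_{\lambda}(q)=\sum_{j=0}^{b}q^{\binom{j+1}{2}}G_{\beta^{(j)}}(q)$, which generalizes equation~(\ref{4}) and the decomposition used in the $b\le 3$ theorem; and your observation that the $t=2$ instances of the conjecture are exactly the central $q$-analogs $\binom{2n}{n}^q$, $\binom{2n+1}{n}^q$ of Conjecture~\ref{centr} is also correct.

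The genuine gaps are the two you flag, and they are substantial. For your task (i), unimodality of $G_{\beta^{(j)}}$ is known only for at most three rows (Lemma~\ref{3}); for four or more rows it can fail (Stanton's $(8,8,4,4)$ and his infinite families), so the layerwise input is simply not available in general, and even where it holds it would not suffice. For task (ii), a sum of shifted unimodal polynomials need not be unimodal, and no analogue of Lemma~\ref{fac} is known that controls the increments of $G_{\beta^{(j)}}$ near its peak for general $j$ and common difference $t-1$; moreover the hypothesis $t\ge a/b$ enters your sketch only heuristically (``the layers are shallow''), whereas Theorem~\ref{at} shows that once the progression fails to reach down to the smallest residues nonunimodality genuinely occurs, so any successful argument must use that hypothesis in an essential, quantitative way. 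Finally, as you note, the case $t=1$, $a=b$ is the staircase $M(b)$, for which only linear-algebra proofs are known; a combinatorial execution of your program would already resolve that outstanding open problem. In short: the decomposition is right, the program is sensible and consistent with the evidence in the paper, but nothing in the proposal proves the statement, which remains a conjecture.
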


Notice that, again similarly to Stanton's situation of arbitrary
partitions, all examples we have constructed of nonunimodal
rank-generating functions $F_{\lambda}$  have exactly two
peaks. However, it seems reasonable to expect that nonunimodality may
occur with an arbitrary number of peaks, though showing  this fact
will probably require a significantly new idea. 

\begin{conjecture}\label{arith2}
For any integer $N\ge 2$, there exists a partition $\lambda$ whose
rank-generating function $F_{\lambda}$ is nonunimodal with (exactly?)
$N$ peaks. 
\end{conjecture}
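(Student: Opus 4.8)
The plan is to realize the required $\lambda=\lambda(N)$ as a shape assembled from $N$ ``dip gadgets'' placed at widely separated scales, exploiting the fact (Theorem~\ref{at}) that a single length-$4$ block in arithmetic progression with common difference $\ge 2$ already forces a local peak followed by a strict dip. Concretely, one would take $\lambda = B_N \cup B_{N-1}\cup\cdots\cup B_1$ (concatenation of the blocks, largest first), where $B_i=\langle n_i,\,n_i-2,\,n_i-4,\,n_i-6\rangle$ (or a similarly short block), the scales satisfy $n_1\ll n_2\ll\cdots\ll n_N$ with each $n_i$ chosen enormous compared to the total number of cells in $B_1\cup\cdots\cup B_{i-1}$, and the concatenation is arranged so that $\lambda$ still has distinct parts. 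The hope is that each $B_i$ contributes a local maximum of $F_\lambda$ near degree $\approx 2n_i$, so that $F_\lambda$ has $N$ peaks in all.

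The key steps, in order, would be the following. \textbf{(1)} Establish a block decomposition of $F_\lambda$ generalizing the identities used in the proofs of Theorems~\ref{uni} and~\ref{at}: sort the parts of a subpartition $\mu$ according to which block's value-range they fall in, remove the appropriate shifted staircases inside each block, and express $F_\lambda$ as a finite sum, indexed by how many parts $\mu$ commits to each block, of products of $q$-binomial coefficients (and $q$-binomial differences) times monomials recording the removed staircases. \textbf{(2)} For each $i$, localize near degree $2n_i$: with the chosen scale separation, the only terms comparable to the peak are those in which $\mu$ puts its ``$n_i$-sized'' mass into $B_i$ and at most a bounded, $n_i$-independent amount of mass elsewhere, so in this degree window $F_\lambda$ agrees, up to strictly lower-order terms, with the $B_i$-gadget generating function convolved with a short polynomial coming from the remaining blocks. \textbf{(3)} Use Lemma~\ref{fac}(b),(c) to show that the dip produced by $B_i$ has depth tending to infinity with $n_i$, while the convolution with the other blocks perturbs each coefficient near degree $2n_i$ by only a bounded amount; then choosing $n_i$ large in terms of $n_{i-1},\dots,n_1$ preserves the dip. \textbf{(4)} Finally, rule out spurious extra peaks in the ``transition regions'' between consecutive scales, which would upgrade ``$\ge N$ peaks'' to ``exactly $N$.''

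The main obstacle is step~(3), and it is precisely why the statement is only conjectured. Convolving a polynomial that has a local dip with a nonnegative, slowly varying factor tends to \emph{fill the dip in} --- it is essentially a partial-summing (smoothing) operation --- so the gadgets must be engineered so that every dip is ``protected'': the quantitative depth supplied by Lemma~\ref{fac} must dominate the total smoothing contributed by \emph{all} the other blocks near that degree, and this must hold uniformly in $N$. This is delicate because $F_\lambda$ is \emph{not} a clean product over the blocks: the number of parts $\mu$ commits to the higher blocks changes how many rows, hence how much room, remain for the lower ones, so the ``short polynomial'' in step~(2) itself depends on the higher-block data. An alternative, possibly more tractable route is an induction on $N$: assuming $\lambda$ has $N-1$ peaks and that $F_\lambda$ has a ``$q$-binomial-like'' tail --- nonincreasing past the last peak, decreasing in steps one can bound, in the spirit of Lemma~\ref{p-1} --- append one more short dip gadget beyond the last peak to create the $N$-th; but one still has to prove that such a tail estimate is genuinely available at every stage, and capturing the ``exactly $N$'' refinement appears to require still finer control. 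A third, more computational approach would be to search for the peaks directly inside a single $q$-analog $\binom{a}{b}^q$ (Proposition~\ref{ab}) for a well-chosen pair $(a,b)=(a(N),b(N))$ --- the case $t=2$ of Theorem~\ref{at} already exhibits two peaks in $\binom{10}{4}^q$ --- which would require first proving a closed formula for $\binom{a}{b}^q$ extending Lemma~\ref{fac}(a) and then analyzing its sign changes for general $b$.
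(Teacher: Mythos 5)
There is nothing in the paper to compare your attempt against: Conjecture~\ref{arith2} is stated as an open conjecture, and the authors explicitly remark just before stating it that exhibiting nonunimodality with an arbitrary number of peaks ``will probably require a significantly new idea.'' So the standard of success here is an actual proof, and your submission --- as you yourself say --- is a plan, not a proof. Your instinct for the construction (widely separated dip gadgets based on Theorem~\ref{at}) is the natural first thing to try, and your diagnosis of where it breaks is accurate: step~(3) is exactly the missing ingredient. It is worth making the obstruction even more concrete than you did. For the $t=2$ gadget the dip is extremely shallow on one side: the proof of Theorem~\ref{at} shows that $(c_{2n}^{(2)},c_{2n-1}^{(2)},c_{2n-2}^{(2)})$ differs from the $t=1$ triple by $(1,1,0)$, and at $t=1$ one has $c_{2n-1}^{(1)}=c_{2n-2}^{(1)}$; hence $c_{2n-2}^{(2)}-c_{2n-1}^{(2)}=1$. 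Near the scale of the largest block the rank-generating function of your concatenated shape is (up to the complications you note) a convolution of the gadget polynomial with the nonnegative polynomial recording the smaller blocks, and a convolution with any polynomial having at least two nonzero coefficients already suffices to fill in a depth-one dip. Lemma~\ref{fac} gives growth only for the \emph{left} gap $c_{2n}-c_{2n-1}$; no analogue is proved (or known) for the right gap, uniformly in the amount of smoothing contributed by the remaining blocks, and without that the whole scale-separation scheme collapses.

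A second point you should not gloss over: the coefficient of $F_\lambda$ near degree $2n_i$ for an \emph{intermediate} block $B_i$ is not governed by the $B_i$ gadget alone. A partition of size $\approx 2n_i$ may place parts of any size up to $n_i$ in the higher rows, whose bounds are astronomically larger and hence never bind; so the local behavior there is the $B_i$ count convolved with something like the full distinct-parts partition function in up to $4(N-i)$ parts, a rapidly increasing factor that smooths far more aggressively than the ``short polynomial'' of your step~(2). This is why your step~(2) localization fails for every block except the largest one, and why the construction, as described, is not expected to produce more than two peaks. In short: you have correctly identified that the statement is open and why, but none of the three routes you sketch closes the quantitative gap, so the conjecture remains exactly that.
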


Finally, recall that a polynomial $\sum_{i=0}^N a_iq^i$ is
\emph{flawless} if $a_i\le a_{N-i}$ for all $i\le N/2$. Though this
property is not as well studied as symmetry or unimodality, several
natural and important sequences in algebra and combinatorics happen to
be flawless (see e.g. \cite{BMMNZ, BMMNZ2, Hi}). We conclude this section
by stating the following intriguing conjecture.
 
\begin{conjecture}\label{arith3}
For any partition $\lambda$, the rank-generating function
$F_{\lambda}$ is flawless. 
\end{conjecture}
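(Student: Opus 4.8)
The plan is to establish flawlessness by exhibiting, for each $i$ with $0\le i\le N/2$, where $N=|\lambda|=\lambda_1+\cdots+\lambda_b$ is the degree of $F_\lambda$, an injection $\phi_i$ from the partitions with distinct parts of $i$ contained in $\lambda$ into those of $N-i$. Such a $\phi_i$ must raise the rank by $N-2i$ in one stroke, and — in contrast with the proofs of Theorem~\ref{uni} and of the theorem on partitions of length at most three — it cannot be obtained by iterating a single rank-raising map $N-2i$ times: by Theorem~\ref{at} the rank sequence of $F_\lambda$ need not be unimodal, so it can have interior valleys, and a chain of one-step injections, even where available, will not telescope past such a valley. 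Hence $\phi_i$ has to be a genuinely long-range map, and it should depend essentially on $i$.

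The first thing I would try is a \emph{corrected} complementation. The naive complement $\mu\mapsto\lambda\setminus\mu$ is useless here, since deleting cells from the left end of a row of a shifted diagram destroys the requirement that row $j$ begin in column $j$; indeed the complement of a proper nonempty shifted subdiagram is never a shifted subdiagram. I would instead pass to the lattice picture. A routine check — the componentwise meet and join of two partitions with distinct parts again have distinct parts, and lie inside $\lambda$ whenever the two originals do — shows that the poset $P_\lambda$ of partitions with distinct parts inside $\lambda$ is the distributive lattice $J(Q_\lambda)$ of order ideals of the cell poset $Q_\lambda$ of the shifted diagram of $\lambda$. Flawlessness of $F_\lambda$ is then exactly the assertion that the order ideals of $Q_\lambda$ are \emph{top-heavy}: the number of those of size $i$ is at most the number of those of size $|Q_\lambda|-i$ for $i\le|Q_\lambda|/2$. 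This is \emph{false} for an arbitrary poset (take a single maximum above a large antichain), so the argument must exploit the shape: $Q_\lambda$ has a unique minimal element, and more importantly its whole structure is that of a shifted diagram, whose ``staircase bias'' — row $j$ is forced to begin in column $j$ — is precisely what should drive the ideal count toward the large end. Concretely, I would search for a saturated-chain decomposition of $Q_\lambda$, produced by a greedy/Dilworth scheme arranged so that the matching it induces on order ideals carries a size-$i$ ideal to a size-$(N-i)$ ideal while staying, in an appropriate averaged sense, in the upper half of the lattice.

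An alternative route reduces to ordinary Ferrers shapes. Removing the staircase $\langle k,k-1,\dots,1\rangle$ and then the first column, exactly as in equation~(\ref{4}) and in the proof of the theorem on partitions of length at most three, yields the decomposition $F_\lambda(q)=\sum_{k=0}^{b}q^{\binom{k+1}{2}}\,G_{\lambda^{[k]}}(q)$, where $\lambda^{[k]}$ is the ordinary partition with $\lambda^{[k]}_j=\lambda_j-(k-j+1)$ for $1\le j\le k$. The $k$-th summand has top degree $\lambda_1+\cdots+\lambda_k$, so only the term $k=b$ reaches degree $N$; one would then try to deduce flawlessness of $F_\lambda$ from flawlessness of the $G_{\lambda^{[k]}}$ together with tight control of how the lower-degree summands sit inside the upper half of the top one. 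The obstruction on this route is that flawlessness is not preserved under adding polynomials of different degrees — even adding a constant to a flawless polynomial can break it — so one needs a strengthened, \emph{uniform} flawlessness statement for the $G_{\lambda^{[k]}}$ (a coefficientwise comparison carrying enough slack to survive the summation). I expect that producing such a strengthening — equivalently, engineering the one-sided matching of the previous paragraph outright — is the main difficulty: every off-the-shelf structural device (symmetric chain decompositions, $\mathfrak{sl}_2$/Lefschetz actions, the normalized matching property) delivers symmetry or unimodality as a byproduct, and both of these already fail for $F_\lambda$, so the construction must be inherently asymmetric.
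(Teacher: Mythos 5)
What you have written is a research plan, not a proof, and the statement you were asked about is in fact stated in the paper as Conjecture~\ref{arith3}: the authors give no proof of it, and it is presented as open. Your structural observations are sound --- the poset of partitions with distinct parts inside the shifted shape $\lambda$ is indeed the distributive lattice of order ideals of the cell poset of the shifted diagram, so flawlessness of $F_\lambda$ is equivalent to top-heaviness of the ideal-size sequence; and your decomposition $F_\lambda(q)=\sum_{k=0}^{b}q^{\binom{k+1}{2}}G_{\lambda^{[k]}}(q)$ is correct and is the natural refinement of equation~(\ref{4}). But neither of your two routes is carried out. On the first route, the entire content of the conjecture is the existence of the rank-raising injections $\phi_i$ (equivalently, the ``one-sided'' chain or matching decomposition you describe), and you give no construction: as you yourself note, the standard devices (symmetric chain decompositions, Lefschetz-type actions, normalized matching) all prove symmetry or unimodality as a byproduct, and both already fail here by Theorem~\ref{at}, so gesturing at a ``greedy/Dilworth scheme arranged so that the matching stays in the upper half'' is precisely the missing idea, not a step of a proof.

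The second route has an additional unproven input besides the summation obstruction you flag: flawlessness of the rank-generating functions $G_\mu$ for ordinary (straight) shapes is itself not established anywhere in this paper or in Stanton's, so even the base case of your proposed reduction is open, let alone the ``uniform'' coefficientwise strengthening you would need to survive adding polynomials of different top degrees (note that only the $k=b$ summand reaches degree $N$, so every comparison $a_i\le a_{N-i}$ with $i$ not too small must be won inside $q^{\binom{b+1}{2}}G_{\lambda^{[b]}}$ alone, with the lower summands only helping on the left-hand side --- which makes the needed inequality strictly stronger than flawlessness of $G_{\lambda^{[b]}}$). In short: correct reformulations, honest identification of the obstacles, but no argument that closes either gap; the conjecture remains unproved by your proposal, exactly as it is unproved in the paper.
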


\section{Acknowledgements} The second author warmly thanks the first
author for his hospitality during calendar year 2013 and the MIT Math
Department for partial financial support. The two authors wish to thank the referees for several comments that improved the presentation of this paper. They also want to acknowledge the use of the computer package Maple, which has been of
invaluable help in suggesting the statements of some of the results and conjectures.



\begin{thebibliography}{ll}


  
\bibitem{alp} L. Alpoge: \emph{Proof of a conjecture of Stanley-Zanello},  J. Combin.\ Theory Ser. A {\bf 125} (2014), no. 1, 166--176.

\bibitem{And} G. E. Andrews: ``The theory of Partitions'', Encyclopedia
  of Mathematics and its Applications, Vol. II, Addison-Wesley,
  Reading, Mass.-London-Amsterdam (1976). 

\bibitem{AE} G. E. Andrews and K. Eriksson: ``Integer Partitions'',
  Cambridge University Press, Cambridge, U.K. (2004). 


\bibitem{BMMNZ} M. Boij, J. Migliore, R. Mir\`o-Roig, U. Nagel and
  F. Zanello: ``On the shape of a pure $O$-sequence'',
  Mem. Amer. Math. Soc. \textbf{218} (2012), no. 2024, vii + 78 pp..

\bibitem{BMMNZ2} M. Boij, J. Migliore, R. Mir\`o-Roig, U. Nagel and
  F. Zanello: \emph{On the Weak Lefschetz Property for artinian
    Gorenstein algebras of codimension three}, J. Algebra \textbf{403} (2014), no. 1, 48--68. 


\bibitem{D1} E.\,B. Dynkin: \emph{Some properties of the weight system
  of a linear representation of a semisimple Lie group} (in Russian),
  Dokl.\ Akad.\ Nauk SSSR (N.S.) \textbf{71} (1950), 221--224. 

\bibitem{D2} E.\,B. Dynkin: \emph{The maximal subgroups of the classical
  groups}, in: Amer. Math. Soc. Transl., Ser. 2, \textbf{6} (1957),
  245--378. Translated from: Trudy Moskov. Mat. Obsc. \textbf{1},
  39--166. 


\bibitem{Hi} T. Hibi: \emph{What can be said about pure
  $O$-sequences?}, J. Combin.\ Theory Ser.\ A {\bf 50} (1989), no. 2,
  319--322. 



\bibitem{Ma} I.\,G. Macdonald: ``Symmetric Functions and Hall
  Polynomials'', Second Ed., Oxford Mathematical Monographs, The
  Clarendon Press, Oxford University Press (1995). 

\bibitem{Oh} K. O'Hara:  \emph{Unimodality of Gaussian coefficients: a
  constructive proof}, J. Combin.\ Theory Ser. A {\bf 53} (1990),
  no. 1, 29--52. 

\bibitem{Pak} I. Pak: \emph{Partition bijections, a survey},
  Ramanujuan J. {\bf 12} (2006), 5--75. 


\bibitem{Pr1} R. Proctor: \emph{Solution of two difficult
  combinatorial problems using linear algebra}, Amer.\ Math.\ Monthly
  {\bf 89} (1982), no. 10, 721--734. 

\bibitem{Pr2} R. Proctor: \emph{Shifted plane partitions of
  trapezoidal shape}, Proc. Amer. Math. Soc. \textbf{89} (1983), no. 3,
  553--559. 

 \bibitem{St1980} R. Stanley: \emph{Unimodal sequences arising from
   Lie algebras}, Combinatorics, representation theory and statistical
   methods in groups, 127--136; Lecture notes in Pure and
   Appl. Mathematics, Dekker, New York (1980). 

\bibitem{St5} R. Stanley: \emph{Weyl groups, the hard Lefschetz
  theorem, and the Sperner property}, SIAM J. Algebraic Discrete
  Methods {\bf 1} (1980), no. 2, 168--184. 


\bibitem{St0} R. Stanley: ``Enumerative Combinatorics'', Vol. I,
  Second Ed., Cambridge University Press, Cambridge, U.K. (2012). 

\bibitem{SZ} R. Stanley and F. Zanello: \emph{Some asymptotic results
  on $q$-binomial coefficients}, preprint. 

\bibitem{stanton} D. Stanton: \emph{Unimodality and Young's lattice},
  J. Combin.\ Theory Ser. A {\bf 54} (1990), no. 1, 41--53. 

\bibitem{Sy} J.J. Sylvester: \emph{Proof of the hitherto
  undemonstrated fundamental theorem of invariants},
  Collect.\ Math.\ papers, Vol. 3, Chelsea, New York (1973), 117--126. 



\bibitem{west} D. West: \emph{A symmetric chain decomposition of
  $L(4,n)$}, European J. Combin.\ \textbf{1} (1980), 379--383. See also
  Stanford University report STAN-CS-79-763 (1979), 15 pp..

\bibitem{Za10} F. Zanello: \emph{Interval Conjectures for level
  Hilbert functions}, J. Algebra {\bf 321} (2009), no. 10, 2705--2715.  

\bibitem{Zei} D. Zeilberger: \emph{Kathy O'Hara's constructive proof
  of the unimodality of the Gaussian polynomial}, Amer.\ Math.\ Monthly
  {\bf 96} (1989), no. 7, 590--602. 

\end{thebibliography}
\end{document}